\newtheorem{theorem}{Theorem}[section]
\newtheorem{lemma}{Lemma}[section]
\newtheorem*{lemma*}{Lemma A}
\newtheorem{example}{Example}[section]
\newtheorem{remark}{Remark}[section]
\numberwithin{equation}{section}
\renewcommand{\eqref}[1]{(\ref{#1})}
\newcommand{\si}{\fontsize{14pt}{\baselineskip}\selectfont}
 \newcommand{\wuhao}{\fontsize{10pt}{\baselineskip}\selectfont}
\newcommand{\wuhaoa}{\fontsize{8.5pt}{\baselineskip}\selectfont}
 \newcommand{\wu}{\fontsize{11pt}{\baselineskip}\selectfont}
 \newcommand{\sihao}{\fontsize{13pt}{\baselineskip}\selectfont}
\DeclareMathOperator{\Aut}{Aut} \DeclareMathOperator{\Dev}{Dev}
\renewcommand{\baselinestretch}{1.3}
\renewcommand{\figurename}{Fig}
\begin{document}
 \pagenumbering{arabic}

\begin{frontmatter}
\title{{\bf \noindent Energy  estimates for two-dimensional space-Riesz fractional wave  equation}}
\author{Minghua Chen$^{*}$, Wenshan Yu}
\cortext[cor2]{Corresponding author. E-mail: chenmh@lzu.edu.cn; yuwsh14@lzu.edu.cn}
\address{School of Mathematics and Statistics, Gansu Key Laboratory of Applied Mathematics and Complex Systems, Lanzhou University, Lanzhou 730000, P.R. China }


\begin{abstract}
The fractional wave equation governs the propagation of mechanical diffusive waves in viscoelastic media which exhibits a power-law creep, and consequently provided a
physical interpretation of this equation in the framework of dynamic viscoelasticity.
In this paper,   we first develop the energy method to estimate the one-dimensional space-Riesz fractional wave equation.
For two-dimensional cases with the variable coefficients, the discretized matrices  are proved to be commutative, which ensures to carry out of the priori error estimates.
The unconditional stability and   convergence with the global truncation error $\mathcal{O}(\tau^2+h^2)$ are theoretically
proved and numerically verified.  In particulary,  the framework of the  priori error estimates and  convergence analysis  are still valid for the compact finite difference  scheme and  the nonlocal wave equation.

\medskip
\noindent {\bf Keywords:}
 Riesz fractional wave  equation; Nonlocal wave equation;  Priori error estimates; Energy method; Numerical stability and convergence

\end{abstract}
\end{frontmatter}

\section{Introduction}
The fractional wave equation   is obtained from the classical  wave equation by replacing the  second-order derivative with a fractional derivative of
order $\alpha$, $1<\alpha\leq 2$. Mainardi  \cite{Mainardi:97} pointed out that the fractional wave equation governs
the propagation of mechanical diffusive waves in viscoelastic media which exhibits a power-law creep, and consequently provided a
physical interpretation of this equation in the framework of dynamic viscoelasticity. 
In this paper, we study a second-order accurate numerical method in both space and time  for the two-dimensional space-Riesz fractional wave equation with the variable coefficients 
whose prototype is, for $1< \alpha,\beta\leq 2$,
\begin{equation}\label{1.1}
\frac{\partial^2u(x,y,t)}{\partial t^2}=a(x,y)\frac{\partial^\alpha u(x,y,t)}{\partial |x|^\alpha}+b(x,y)\frac{\partial^\beta u(x,y,t)}{\partial |y|^\beta}+f(x,y,t).
\end{equation}
The initial conditions are
\begin{equation}\label{1.2}
\begin{array}{ll}
&u(x,y,0)=\varphi(x,y) ~~~ {\rm for}~~~ (x,y) \in \Omega,\\
&u_t(x,y,0)=\psi(x,y) ~~~ {\rm for}~~~ (x,y) \in \Omega,
\end{array}
\end{equation}
and the Dirichlet boundary condition
\begin{displaymath}
  u(x,y,t)=0 ~~~ {\rm for}~~~ (x,y) \in \partial \Omega
\end{displaymath}
with  $ \Omega=(0,x_r) \times (0,y_r)$. The function $f(x,y,t)$ is a source
term and all the coefficients   are positive, i.e., $0<a_0\leq a(x,y)\leq a_1$ and $0<b_0\leq b(x,y)\leq b_1$.

The space-Riesz fractional derivative appears in the continuous limit of lattice models with long-range interactions \cite{Tarasov:10},
for $n \in \mathbb{N}$, $n-1 \leq \alpha < n$,
which is defined as  \cite{Podlubny:99}
\begin{equation}\label{1.3}
\frac{\partial^\alpha u(x,y,t)}{\partial |x|^\alpha} =-\kappa_{\alpha}\left( _{0}D_x^{\alpha}+ _{x}\!D_{x_r}^{\alpha} \right)u(x,y,t)  ~~{\rm with}~~\kappa_{\alpha}=\frac{1}{2\cos(\alpha \pi/2)},
\end{equation}
where
\begin{equation*}
\begin{split}
   _{0}D_x^{\alpha}u(x,y,t)
& =\frac{1}{\Gamma(n-\alpha)} \displaystyle \frac{\partial^n}{\partial x^n}
   \int_{0}\nolimits^x{\left(x-\xi\right)^{n-\alpha-1}}{u(\xi,y,t)}d\xi, \\
   _{x}D_{x_r}^{\alpha}u(x,y,t)
&=\frac{(-1)^n}{\Gamma(n-\alpha)}\frac{\partial^n}{\partial x^n}
\int_{x}\nolimits^{x_r}{\left(\xi-x\right)^{n-\alpha-1}}{u(\xi,y,t)}d\xi.
\end{split}
\end{equation*}

For  the Caputo-Riesz time-space fractional  wave equation
${^c}\!{D}_t^\gamma u(x,t)= \frac{\partial^\alpha u(x,t)}{\partial |x|^\alpha} $ with $1<\alpha, \gamma\leq 2$,
Mainardi (2001) et al. obtained the fundamental solution of the space-time fractional diffusion equation \cite{Mainardi:2001}.
Metzler and Nonnenmacher (2002) investigated the physical backgrounds and implications of a space-and time-fractional diffusion and wave equation \cite{{Metzler:02}}.
The numerical solution of space-time fractional diffusion-wave equations are discussed in \cite{Bhrawy:16,Garg:14}, 
but they are lack of the stability and  convergence analysis.
To rewrite the fractional diffusion-wave equation as the  the Volterra type  integro-differential equations,   the stability and  convergence analysis are given with the zero initial conditions \cite{Chen:17}.
For $1<\gamma<2$ and $\alpha=2$, it has been proposed  by various authors \cite{Chen:1992,Cuesta:06,Liu:13,Mclean:1993,Mustapha:13,Sun:06,Yang:14,Zeng:15,Zhang:12}.
For example,  based on the second-order fractional Lubich's methods \cite{Lubich:86},
Cuesta (2006) et al.  derived  the second-order error bounds of the time discretization in a Banach space with the  $\frac{\partial^2 u}{\partial x^2} $ a  sectorial operator  \cite{Cuesta:06}
and Yang (2014) et al. obtained the  second-order convergence  schemes with $1\leq \gamma \leq 1.71832$ \cite{Yang:14}.
For $\gamma=2$ and $1<\alpha<2$,  it seems that achieving a second-order accurate scheme for (\ref{1.1}) is not an easy task
with the nonzero initial conditions.
This paper focuses on providing the weighted numerical scheme to solve the space-Riesz fractional wave equation with the  nonzero initial conditions and the variable coefficients
in one-dimensional and two-dimensional cases. The unconditional stability and   convergence with the global truncation error $\mathcal{O}(\tau^2+h^2)$ are theoretically
proved and numerically verified by the energy method,  which can be easily extended to the nonlocal wave  equation \cite{Du:12}.

The rest of the paper is organized as follows. The next section proposes the second-order accurate scheme for (\ref{1.1}).
In Section 3, we carry out a detailed stability and convergence analysis with the second order accuracy in both time and space directions for the derived schemes.
To show the effectiveness of the schemes, we perform the numerical experiments to verify the theoretical results in Section 4.
The paper is concluded with some remarks in the last section.

\section{Discretization Schemes}\label{sec:1}
Let the mesh points $x_i=ih$, $i=0,1,\ldots,N_x$,
and $t_k=k\tau$, $0\leq k \leq {N_t}$ with $h=x_r/{N_x}$, $\tau=T/{N_t}$,
i.e., $h$ is the uniform space stepsize and $\tau$ the time stepsize. And ${u_i^k}$ denotes the
approximated value of $u(x_i,t_k)$, $a_{i}=a(x_i)$,   $f_{i}^{k}=f(x_i,t_{k})$.

Nowadays, there are already many types of high
order discretization schemes for the Riemann-Liouville space fractional derivatives \cite{Chen:0013,Hao:15,Ji:15,Ortigueira:06,Sousa:12,Tadjeran:06,Tian:12}.
Here, we take the following schemes to approach  (\ref{1.3}), see in \cite{Chen:16,Tian:12}
\begin{equation}\label{2.1}
\begin{split}
_{0}D_{x}^{\alpha}u(x_i)
&=\delta_{x,+}^\alpha u(x_i) +\mathcal{O}(h^2)~~{\rm with}~~\delta_{x,+}^\alpha u(x_i)=\frac{1}{h^{\alpha}}\sum_{m=0}^{i+1}\varphi_{m}^{\alpha}u(x_{i-m+1}),\\
_{x}D_{x_r}^{\alpha}u(x_i)
&=\delta_{x,-}^\alpha u(x_i) +\mathcal{O}(h^2)~~{\rm with}~~ \delta_{x,-}^\alpha u(x_i)=\frac{1}{h^{\alpha}}\sum_{m=0}^{N_x-i+1}\varphi_{m}^{\alpha}u(x_{i+m-1}),
\end{split}
\end{equation}
where
\begin{equation*}
\begin{split}
\varphi_{0}^{\alpha}=\frac{\alpha}{2}g_{0}^{\alpha},~~\varphi_{m}^{\alpha}=\frac{\alpha}{2}g_{m}^{\alpha}+\frac{2-\alpha}{2}g_{m-1}^{\alpha},~~m\geq 1,
\end{split}
\end{equation*}
and
\begin{equation*}
g_m^{\alpha}=(-1)^m\left ( \begin{matrix}\alpha \\ m\end{matrix} \right ),~~{\rm i.e.,}~~
g_0^{\alpha}=1, ~~~~g_m^{\alpha}=\left(1-\frac{\alpha+1}{m}\right)g_{m-1}^{\alpha},~~m \geq 1.
\end{equation*}

Using (\ref{1.3}) and (\ref{2.1}), we obtain the approximation operator of the space-Riesz fractional derivative
\begin{equation}\label{2.2}
\begin{split}
\frac{\partial^\alpha  u(x_i)}{\partial |x|^\alpha}
 =\nabla^\alpha_h u(x_{i}) +\mathcal{O}(h^2)
\end{split}
\end{equation}
with
$$\nabla^\alpha_h u(x_{i})  =-\kappa_{\alpha}\left(\delta_{x,+}^\alpha +\delta_{x,-}^\alpha  \right)u(x_{i})=-\frac{\kappa_{\alpha}}{h^\alpha} \sum_{l=0}^{N_x}\varphi_{i,l}^{\alpha}u(x_{l}),$$
where  $i=1,\ldots,N_x-1$ (together with the zero Dirichlet boundary conditions)  and
\begin{equation*}
\varphi_{i,l}^{\alpha}=\left\{ \begin{array}
 {l@{\quad } l}
  \varphi_{i-l+1}^{\alpha},&l < i-1,\\
  \varphi_{0}^{\alpha}+\varphi_{2}^{\alpha} ,&l=i-1,\\
 2\varphi_{1}^{\alpha},&l=i,\\
\varphi_{0}^{\alpha}+\varphi_{2}^{\alpha} ,&l=i+1,\\
\varphi_{l-i+1}^{\alpha} ,&l>i+1.
\end{array}\right.
\end{equation*}
Taking $u=[u({x_1}),u({x_2}),\cdots,u({x_{N_x-1}})]^{\rm T}$, and  using (\ref{2.1}), (\ref{2.2}), there exists
\begin{equation*}
\begin{split}
& \frac{1}{h^\alpha}\left[\sum_{l=0}^{N_x}\varphi_{1,l}^{\alpha}u(x_{l}),\sum_{l=0}^{N_x}\varphi_{2,l}^{\alpha}u(x_{l}),\ldots,\sum_{l=0}^{N_x}\varphi_{N_x-1 ,l}^{\alpha}u(x_{l}) \right]^T
= \left(\delta_{x,+}^\alpha +\delta_{x,-}^\alpha  \right)u   =  \frac{1}{h^\alpha}A_\alpha u,
\end{split}
\end{equation*}
it yields
\begin{equation}\label{2.3}
\begin{split}
&\nabla^\alpha_h u
=-\kappa_\alpha \left(\delta_{x,+}^\alpha +\delta_{x,-}^\alpha  \right)u   =  \frac{-\kappa_\alpha}{h^\alpha}A_\alpha u,
\end{split}
\end{equation}
where the matrix
\begin{equation}\label{2.4}
A_\alpha=B_{\alpha}+B_{\alpha}^T~~{\rm with} ~~
B_\alpha=\left [ \begin{matrix}
\varphi_1^{\alpha}   &\varphi_2^{\alpha}&\varphi_3^{\alpha}     &      \cdots   &  \varphi_{N_x\!-\!2}^{\alpha}     &  \varphi_{N_x\!-\!1}^{\alpha}  \\
\varphi_0^{\alpha}&  \varphi_1^{\alpha}    &\varphi_2^{\alpha}&  \varphi_3^{\alpha}    &     \cdots   &  \varphi_{N_x\!-\!2}^{\alpha} \\
            &\varphi_0^{\alpha}&\varphi_1^{\alpha}         & \varphi_2^{\alpha}&     \ddots              & \vdots  \\
                   &                  &       \ddots            &        \ddots            &      \ddots             &  \varphi_3^{\alpha}  \\
  &          &              &        \ddots            &   \varphi_1^{\alpha}         &\varphi_2^{\alpha} \\
   &      &          &                   &\varphi_0^{\alpha}& \varphi_1^{\alpha}
 \end{matrix}
 \right ].
\end{equation}

\subsection{Numerical scheme for one-dimensional space-Riesz fractional wave equation}
We now examine the full discretization scheme to the one-dimensional space-Riesz fractional wave equation, i.e,
\begin{equation}\label{2.5}
\frac{\partial^2u(x,t)}{\partial t^2}=a(x)\frac{\partial^\alpha u(x,t)}{\partial |x|^\alpha}+f(x,t)
\end{equation}
with $0<a_0\leq a(x)\leq a_1$  and the zero  Dirichlet boundary condition.  The initial conditions are
\begin{equation}\label{2.6}
\begin{array}{ll}
&u(x,0)=\varphi(x) ~~~ {\rm for}~~~ x \in \Omega,\\
&u_t(x,0)=\psi(x) ~~~ {\rm for}~~~ x \in \Omega.
\end{array}
\end{equation}
In the  time direction derivative, we use the following center difference scheme
\begin{equation}\label{2.7}
  \frac{\partial^2 u(x,t) }{\partial t^2}= \delta^2_t u(x_i,t_k)+\mathcal{O}(\tau^2)
  ~~{\rm with}~~\delta^2_t u(x_i,t_k)= \frac{u_{i}^{k+1}-2u_{i}^k+u_{i}^{k-1}}{\tau^2}.
\end{equation}

In order to achieve an unconditional stable   algorithm, we use the
weighted algorithm for the space-Riesz  fractional derivative, i.e.,
$$\theta u_{i}^{k+1}+(1-2\theta)u_{i}^k+\theta u_{i}^{k-1},~~ \frac{1}{4}\leq \theta\leq 1,$$
to approximate $u(x_i,t_k)$. From (\ref{2.2}) and the above equations,  we can rewrite (\ref{2.5}) as
\begin{equation}\label{2.8}
\begin{split}
&\frac{u(x_i,t_{k+1})-2u(x_i,t_{k})+u(x_i,t_{k-1})}{\tau^2}\\
&  = a(x_i)\nabla^\alpha_h\left[\theta u(x_i,t_{k+1})+(1-2\theta)u(x_i,t_{k})+\theta u(x_i,t_{k-1})\right]
 +f(x_i,t_{k})+ R_i^{k}
\end{split}
\end{equation}
with the local truncation error
\begin{equation}\label{2.9}
 R_i^{k}\leq C_{u,\alpha}(\tau^2+h^2),
\end{equation}
where the constant $C_{u,\alpha}$ is independent of $h$ and $\tau$.
Therefore, the full discretization of (\ref{2.5}) has the following form
\begin{equation}\label{2.10}
  \delta^2_tu^k_i=\theta a_i\nabla^\alpha_hu^{k+1}_i+(1-2\theta)
  a_i\nabla^\alpha_hu^k_i+\theta a_i\nabla^\alpha_hu^{k-1}_i+f_i^k,
\end{equation}
i.e.,
\begin{equation}\label{2.11}
\begin{split}
 &  u_i^{k+1} +\theta \frac{  \tau^2}{h^{\alpha}}{\kappa_{\alpha}a_i} \left[ \sum_{m=0}^{i+1}\varphi_m^{\alpha}u_{i-m+1}^{k+1}
 +\sum_{m=0}^{N_x-i+1}\varphi_m^{\alpha}u_{i+m-1}^{k+1}\right]    \\
 & =2u_i^{k} -(1-2\theta)\frac{  \tau^2}{h^{\alpha}}{\kappa_{\alpha}a_i}\left[ \sum_{m=0}^{i+1}\varphi_m^{\alpha}u_{i-m+1}^{k}
 +\sum_{m=0}^{N_x-i+1}\varphi_m^{\alpha}u_{i+m-1}^{k}\right]    \\
&\quad-u_i^{k-1} -\theta \frac{  \tau^2}{h^{\alpha}}{\kappa_{\alpha}a_i}\left[ \sum_{m=0}^{i+1}\varphi_m^{\alpha}u_{i-m+1}^{k-1}
 +\sum_{m=0}^{N_x-i+1}\varphi_m^{\alpha}u_{i+m-1}^{k-1}\right]  +\tau^2 f_i^{k}.
\end{split}
\end{equation}
Using (\ref{2.5}), (\ref{2.6}) and Taylor expansion with integral form of the remainder, there exists
\begin{equation}\label{2.12}
\begin{split}
u(x_i,\tau)
&=u(x_i,0)+\tau \frac{\partial u(x_i,0)}{\partial t}+\frac{\tau^2}{2} \frac{\partial^2 u(x_i,0)}{\partial t^2}
+\frac{1}{2}\int_0^\tau(\tau-t)^2\frac{\partial^3 u(x_i,t)}{\partial t^3}dt\\
&=\varphi(x_i)+\tau \psi(x_i)+\frac{\tau^2}{2} \left[a(x_i)\frac{\partial^\alpha u(x_i,0)}{\partial |x|^\alpha}+f(x_i,0)\right]
 +\frac{1}{2}\int_0^\tau(\tau-t)^2\frac{\partial^3 u(x_i,t)}{\partial t^3}dt.
\end{split}
\end{equation}
Then we can obtain $u_{i}^1$, i.e.,
\begin{equation}\label{2.13}
\begin{split}
 u_i^1
 &=\varphi(x_i)+\tau\psi(x_i)+\frac{\tau^2}{2}\left[a(x_i)\nabla^\alpha_hu(x_i,0)+f(x_i,0)\right]\\
\end{split}
\end{equation}
with the local truncation error $\mathcal{O}(\tau^3+\tau^2h^2)$, see Section 3.

For the convenience of implementation, we use the matrix form of the grid functions
 \begin{equation*}
 U^{k}=\left[u_1^k,u_2^k,\ldots,u_{N_x-1}^k\right]^{\rm T}, ~~F^{k}=\left[f_1^{k},f_2^{k},\ldots,f_{N_x-1}^{k}\right]^{\rm T}.
  \end{equation*}
Hence, the finite difference scheme (\ref{2.11}) can be recast as
\begin{equation}\label{2.14}
\begin{split}
&\left[I +\theta \frac{  \tau^2}{h^{\alpha}}{\kappa_{\alpha}D}A_{\alpha}  \right] U^{k+1}
= \left[2I-\left(1-2 \theta\right) \frac{  \tau^2}{h^{\alpha}}{\kappa_{\alpha}D}A_{\alpha} \right] U^{k}
-\left[I+\theta\frac{  \tau^2}{h^{\alpha}}{\kappa_{\alpha}D}A_{\alpha} \right] U^{k-1}+\tau F^{k},
\end{split}
\end{equation}
where $A_\alpha$ is defined by (\ref{2.4}) and the diagonal matrix
\begin{equation}\label{2.15}
D=\left [ \begin{matrix}
  a_1                       \\
       & a_{2}                  \\
       &       & \ddots            \\
       &       &      &a_{N_x-1}
 \end{matrix}
 \right ].
\end{equation}

\subsection{Numerical scheme for two-dimensional space-Riesz fractional wave equation}
 Let the mesh points $x_i=ih_x$, $i=0,1,\ldots,N_x$ and $y_j=jh_y$, $j=0,1,\ldots,N_y$ and $t_k=k\tau$, $0\leq k \leq {N_t}$ with  $h_x=x_r/{N_x}$, $h_y=y_r/N_y$, $\tau=T/N_t$. Similarly, we  take $u_{i,j}^k$ as the approximated value of $u(x_i,y_j,t_k)$,
 $a_{i,j}=a(x_i,y_j)$,  $b_{i,j}=b(x_i,y_j)$, $f_{i,j}^{k}=f(x_i,y_j,t_{k})$. We use the center difference
scheme to do the discretization in time direction derivative,
\begin{displaymath}
  \frac{\partial^2 u(x,y,t) }{\partial t^2}= \frac{u_{i,j}^{k+1}-2u_{i,j}^k+u_{i,j}^{k-1}}{\tau^2}+\mathcal{O}(\tau^2),
\end{displaymath}
and the weighted schemes for the space-Riesz fractional derivative, i.e.,
$\theta u_{i,j}^{k+1}+(1-2\theta)u_{i,j}^k+\theta u_{i,j}^{k-1}$ to
approximate $u(x_i,y_j,t_k)$. Therefore  (\ref{1.1}) can be rewritten  as
\begin{equation}\label{2.16}
\begin{split}
&\frac{u(x_i,y_j,t_{k+1})-2u(x_i,y_j,t_{k})+u(x_i,y_j,t_{k-1})}{\tau^2}\\
&  =a(x_i,y_j)\nabla^\alpha_{h_x}\Big(\theta u(x_i,y_j,t_{k+1})+(1-2\theta)u(x_i,y_j,t_{k})+\theta u(x_i,y_j,t_{k-1})\Big)\\
& \quad +  b(x_i,y_j)\nabla^\beta_{h_y}\Big(\theta u(x_i,y_j,t_{k+1})+(1-2\theta)u(x_i,y_j,t_{k})+\theta u(x_i,y_j,t_{k-1})\Big)
 +f(x_i,y_j,t_{k})+ R_{i,j}^{k},
\end{split}
\end{equation}
where the local truncation error is
\begin{equation}\label{2.17}
 R_{i,j}^{k}\leq C_{u,\alpha,\beta}(\tau^2+h_x^2+h_y^2).
\end{equation}
 Similarly,  we  denote
\begin{equation}\label{2.18}
\begin{split}
    &\nabla^\alpha_{h_x}u(x_i,y_j)=-\kappa_{\alpha}\left(\delta_{x,+}^\alpha+ \delta_{x,-}^\alpha\right)u(x_i,y_j)
  ~~ {\rm and}~~ ~\nabla^\beta_{h_y}u(x_i,y_j)=-\kappa_{\beta}\left(\delta_{y,+}^\beta+ \delta_{y,-}^\beta\right)u(x_i,y_j).
\end{split}
\end{equation}
Therefore, the resulting discretization of (\ref{1.1}) has the following form
\begin{equation}\label{2.19}
\begin{split}
 \delta^2_tu^k_{i,j}
 & =\theta a_{i,j}\nabla^\alpha_{h_x}u^{k+1}_{i,j}+(1-2\theta)
  a_{i,j}\nabla^\alpha_{h_x}u^k_{i,j}+\theta a_{i,j}\nabla^\alpha_{h_x}u^{k-1}_{i,j}\\
  &\quad+\theta b_{i,j}\nabla_{h_y}^\beta u^{k+1}_{i,j}+(1-2\theta)
  b_{i,j}\nabla_{h_y}^\beta u^k_{i,j}+\theta b_{i,j}\nabla_{h_y}^\beta u^{k-1}_{i,j}+f_{i,j}^k,
\end{split}
\end{equation}
i.e.,
\begin{equation}\label{2.20}
\begin{split}
&\Big[1-\theta \tau^2 \big(a_{i,j}\nabla^\alpha_{h_x} + b_{i,j}\nabla^\beta_{h_y}\big)\Big]u_{i,j}^{k+1}\\
& =\Big[2+(1-2\theta)  \tau^2 \big(a_{i,j}\nabla^\alpha_{h_x} + b_{i,j}\nabla^\beta_{h_y} \big)\Big]u_{i,j}^{k}
-\Big[1-\theta \tau^2 \big(a_{i,j}\nabla^\alpha_{h_x} + b_{i,j}\nabla^\beta_{h_y} \big)\Big]u_{i,j}^{k-1}
+\tau^2 f_{i,j}^{k}.
\end{split}
\end{equation}
Using (\ref{2.12}) and (\ref{2.13}),  we can obtain
\begin{equation}\label{2.21}
\begin{split}
u_{i,j}^1
&=\varphi(x_i,y_j)+\tau \psi(x_i,y_j)+\frac{\tau^2}{2} \left[\big(  a_{i,j}\nabla^\alpha_{h_x}+b_{i,j}\nabla^\beta_{h_y} \big)u_{i,j}^{0} +f_{i,j}^{0}\right]
\end{split}
\end{equation}
with the local truncation error  $\mathcal{O}(\tau^3+\tau^2h_x^2+\tau^2h_y^2)$, see Section 3.

For the two-dimensional space-Riesz  fractional wave equation (\ref{1.1}), the relevant   perturbation equation of (\ref{2.20}) is of the form
\begin{equation}\label{2.22}
\begin{split}
&\Big(1-\theta\tau^2 a_{i,j}\nabla^\alpha_{h_x} \Big)\Big(1-\theta\tau^2 b_{i,j}\nabla^\beta_{h_y} \Big)u_{i,j}^{k+1}\\
&=\Big[2\Big(1-\theta\tau^2 a_{i,j}\nabla^\alpha_{h_x}  \Big)\Big(1-\theta\tau^2 b_{i,j}\nabla^\beta_{h_y} \Big)
    +\tau^2 a_{i,j}\nabla^\alpha_{h_x} + \tau^2 b_{i,j}\nabla^\beta_{h_y}\Big]        u_{i,j}^{k}\\
 &\quad-\Big(1-\theta\tau^2 a_{i,j}\nabla^\alpha_{h_x}  \Big)\Big(1-\theta\tau^2 b_{i,j}\nabla^\beta_{h_y}\Big)u_{i,j}^{k-1}
+\tau^2 f_{i,j}^{k}.
\end{split}
\end{equation}
Comparing (\ref{2.22}) with (\ref{2.20}), the splitting term is
given by
\begin{equation*}
\theta^2 \tau^4 a_{i,j}b_{i,j}\nabla^\alpha_{h_x}\nabla^\beta_{h_y} \big(u_{i,j}^{k+1}-2u_{i,j}^{k}+u_{i,j}^{k-1}\big),
\end{equation*}
since $\big(u_{i,j}^{k+1}-2u_{i,j}^{k}+u_{i,j}^{k-1}\big)$ is an
$\mathcal{O}(\tau^2)$ term, it implies  that the perturbation
contributes an $\mathcal{O}(\tau^6)$ error component to the truncation error of (\ref{2.20}).
Thus we can rewrite (\ref{1.1}) as
\begin{equation}\label{2.23}
\begin{split}
&\frac{u(x_i,y_j,t_{k+1})-2u(x_i,y_j,t_{k})+u(x_i,y_j,t_{k-1})}{\tau^2} \\
&\quad+\theta^2 \tau^4a_{i,j}b_{i,j}\nabla^\alpha_{h_x}\nabla^\beta_{h_y}\big(u(x_i,y_j,t_{k+1})-2u(x_i,y_j,t_{k})+u(x_i,y_j,t_{k-1})\big)\\
&=a(x_i,y_j)\nabla^\alpha_{h_x}\Big(\theta u(x_i,y_j,t_{k+1})+(1-2\theta)u(x_i,y_j,t_{k})+\theta u(x_i,y_j,t_{k-1})\Big)\\
& \quad +  b(x_i,y_j)\nabla^\beta_{h_y}\Big(\theta u(x_i,y_j,t_{k+1})+(1-2\theta)u(x_i,y_j,t_{k})+\theta u(x_i,y_j,t_{k-1})\Big)
 +f(x_i,y_j,t_{k})+ \widetilde{R}_{i,j}^{k}
\end{split}
\end{equation}
where
\begin{equation}\label{2.24}
\begin{split}
\widetilde{R}_{i,j}^{k}
&={R}_{i,j}^{k}+\theta^2 \tau^4a_{i,j}b_{i,j}\nabla^\alpha_{h_x}\nabla^\beta_{h_y}\big(u(x_i,y_j,t_{k+1})-2u(x_i,y_j,t_{k})+u(x_i,y_j,t_{k-1})\big)\\
&\leq \widetilde{C}_{u,\alpha,\beta}(\tau^2+h_x^2+h_y^2).
\end{split}
\end{equation}
Hence, the system (\ref{2.22}) can be solved by the  alternating direction implicit method (D-ADI)  \cite{Douglas:55,Douglas:64}:
\begin{equation}\label{2.25}
\begin{split}
&\Big(1-\theta \tau^2 a_{i,j}\nabla^\alpha_{h_x}\Big)u_{i,j}^{*}\\
&\quad =2u_{i,j}^k -u_{i,j}^{k-1}
 +\tau^2a_{i,j}\nabla^\alpha_{h_x}\left ((1-2\theta)u_{i,j}^{k}+\theta u_{i,j}^{k-1}\right )
+\tau^2b_{i,j}\nabla^\beta_{h_y}u_{i,j}^k+ \tau^2f_{i,j}^{k},\\
&\Big(1 -\theta \tau^2 b_{i,j}\nabla^\beta_{h_y} \Big)u_{i,j}^{k+1}
=u_{i,j}^{*}+\theta\tau^2 b_{i,j}\nabla^\beta_{h_y}\left (-2 u_{i,j}^{k}+u_{i,j}^{k-1}\right ),
\end{split}
\end{equation}
where $u_{i,j}^{*}$ is an intermediate solution.
Take
\begin{equation*}
\begin{split}
&\mathbf{U}^{k}=[u_{1,1}^k,u_{2,1}^k,\ldots,u_{N_x-1,1}^k,u_{1,2}^k,u_{2,2}^k,\dots,u_{N_x-1,2}^k,\ldots,u_{1,N_y-1}^k,u_{2,N_y-1}^k,\ldots,u_{N_x-1,N_y-1}^k]^T,\\
&\mathbf{F}^{k}=[f_{1,1}^k,f_{2,1}^k,\ldots,f_{N_x-1,1}^k,f_{1,2}^k,f_{2,2}^k,\dots,f_{N_x-1,2}^k,\ldots,f_{1,N_y-1}^k,f_{2,N_y-1}^k,\ldots,f_{N_x-1,N_y-1}^k]^T,\\
\end{split}
\end{equation*}
and denote
\begin{equation}\label{2.26}
\begin{split}
\mathcal{A}_{x} = I \otimes A_{\alpha}
~~{\rm and}~~
\mathcal{A}_{y} =A_{\beta}  \otimes I,
\end{split}
 \end{equation}
where $I$  denotes the unit matrix and  the symbol $\otimes$ the Kronecker product \cite{Laub:05},
and $A_{\alpha}$, $A_{\beta}$ are defined by (\ref{2.4}).
Therefore, we can rewrite  (\ref{2.25}) as the following form
\begin{equation}\label{2.27}
\begin{split}
 \left(I+\theta\frac{  \tau^2}{h_x^{\alpha}}{\kappa_{\alpha}D}\mathcal{A}_x \right)\mathbf{U}^{*}
& = \Big(2I-\left(1-2 \theta\right) \frac{  \tau^2}{h_x^{\alpha}}{\kappa_{\alpha}D}\mathcal{A}_x
    -\frac{  \tau^2}{h_y^{\beta}}{\kappa_{\beta}E}\mathcal{A}_y \Big) \mathbf{U}^{k} \\
&\quad -\left(I+\theta\frac{  \tau^2}{h_x^{\alpha}}{\kappa_{\alpha}D}\mathcal{A}_x \right)\mathbf{U}^{k-1}+\tau^2 \mathbf{F}^{k},\\
 \left(I+\theta\frac{  \tau^2}{h_y^{\beta}}{\kappa_{\beta}E}\mathcal{A}_y\right)\mathbf{U}^{k+1}
 &  = \left(2\theta\frac{ \tau^2}{h_y^{\beta}}{\kappa_{\beta}E}\mathcal{A}_y\right) \mathbf{U}^{k}
  -\left(\theta\frac{ \tau^2}{h_y^{\beta}}{\kappa_{\beta}E}\mathcal{A}_y\right)\mathbf{U}^{k-1}+\mathbf{U}^{*},
\end{split}
\end{equation}
where
 \begin{equation*}
 D=\left [ \begin{matrix}
  D_1                        \\
       &  D_2                 \\
       &       & \ddots            \\
       &       &      & D_{N_y-1}
 \end{matrix}
 \right ]
 ~~{\rm with }~~
 D_j=\left [ \begin{matrix}
  a_{1,j}                        \\
       & a_{2,j}                  \\
       &       & \ddots            \\
       &       &      &a_{N_x-1,j}
 \end{matrix}
 \right ]
\end{equation*}
and
 \begin{equation*}
 E=\left [ \begin{matrix}
  E_1                        \\
       &  E_2                 \\
       &       & \ddots            \\
       &       &      & E_{N_y-1}
 \end{matrix}
 \right ]
 ~~{\rm with }~~
 E_j=\left [ \begin{matrix}
  b_{1,j}                        \\
       & b_{2,j}                  \\
       &       & \ddots            \\
       &       &      &b_{N_x-1,j}
 \end{matrix}
 \right ].
\end{equation*}

\section{Convergence and Stability Analysis}
To rewrite the fractional diffusion-wave equation as the  the Volterra type  integro-differential equations,   the stability and convergence analysis are given with the zero initial conditions \cite{Chen:17}.
Here, we first   develop the energy method to estimate the space-Riesz fractional wave equation with the nonzero initial conditions.
For two-dimensional cases with the variable coefficients, the discretized matrices  are proved to be commutative, which ensures to carry out of the priori error estimates.
\begin{lemma}\cite{Wang:2015}\label{lemma3.1}
Let $\nabla^\alpha_h$ be given in (\ref{2.3}) and $1<\alpha\leq2$. Then there exists an symmetric positive definite  matrix $\Lambda_h^\alpha $ such that
\begin{equation*}
-(\nabla^\alpha_h u,u)>0 \quad and \quad -(\nabla^\alpha_h u,v)=(\Lambda_h^\alpha u , \Lambda_h^\alpha v)
~~{\rm with}~~-\nabla^\alpha_h=\Lambda_h^\alpha\cdot\Lambda_h^\alpha.
\end{equation*}
\end{lemma}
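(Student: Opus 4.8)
The plan is to realize $-\nabla^\alpha_h$ as a symmetric positive definite matrix and then to take $\Lambda^\alpha_h$ to be its unique symmetric positive definite square root; once that is done, both displayed identities are one-line consequences of the symmetry of $\Lambda^\alpha_h$. First I would pin down the sign of the normalizing constant: for $1<\alpha\le 2$ we have $\alpha\pi/2\in(\pi/2,\pi]$, hence $\cos(\alpha\pi/2)<0$ and $\kappa_\alpha=1/(2\cos(\alpha\pi/2))<0$. By \eqref{2.3}, $-\nabla^\alpha_h=(\kappa_\alpha/h^\alpha)\,A_\alpha=-(|\kappa_\alpha|/h^\alpha)\,A_\alpha$, and $A_\alpha=B_\alpha+B_\alpha^{\mathrm T}$ is symmetric by construction. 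So everything reduces to showing that the real symmetric Toeplitz matrix $A_\alpha$ in \eqref{2.4} is negative definite.

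For this I would use the Fourier symbol of $A_\alpha$. From $\varphi_0^\alpha=\tfrac{\alpha}{2}g_0^\alpha$, $\varphi_m^\alpha=\tfrac{\alpha}{2}g_m^\alpha+\tfrac{2-\alpha}{2}g_{m-1}^\alpha$ and $\sum_{m\ge0}g_m^\alpha z^m=(1-z)^\alpha$, the banded Toeplitz matrix $B_\alpha$ has symbol $f_\alpha(\theta)=e^{-\mathrm i\theta}(1-e^{\mathrm i\theta})^\alpha\bigl(\tfrac{\alpha}{2}+\tfrac{2-\alpha}{2}e^{\mathrm i\theta}\bigr)$, so $A_\alpha$ has the real even symbol $g_\alpha(\theta)=2\,\mathrm{Re}\,f_\alpha(\theta)$, and for any real vector $x=(x_1,\dots,x_{N_x-1})^{\mathrm T}$,
\[
 x^{\mathrm T}A_\alpha x=\frac{1}{2\pi}\int_{-\pi}^{\pi} g_\alpha(\theta)\,\Bigl|\sum_{j} x_j e^{\mathrm i j\theta}\Bigr|^2\,d\theta .
\]
Thus the matter comes down to proving $g_\alpha(\theta)<0$ for every $\theta\in(-\pi,\pi]\setminus\{0\}$ (at $\theta=0$ the factor $(1-e^{\mathrm i\theta})^\alpha$ vanishes, which contributes nothing to the integral). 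Since $g_\alpha$ is even it suffices to take $\theta\in(0,\pi]$, where $1-e^{\mathrm i\theta}=2\sin(\tfrac{\theta}{2})\,e^{\mathrm i(\theta-\pi)/2}$ turns $g_\alpha(\theta)$ into $2\bigl(2\sin\tfrac{\theta}{2}\bigr)^\alpha$ times an explicit trigonometric factor in $\theta$ and $\alpha$; the content of the estimate is that this factor is strictly negative on $(0,\pi]$, which for the limiting case $\alpha=2$ degenerates to $g_2(\theta)=4(\cos\theta-1)<0$. I expect this sign estimate to be the genuine obstacle; rather than reproving it I would quote it, since it is precisely the statement established in \cite{Wang:2015} (and the non-positivity of this symbol already underlies the stability analysis of the weighted and shifted Gr\"unwald scheme in \cite{Tian:12}).

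Granting that $A_\alpha$ is negative definite, the matrix $M:=-\nabla^\alpha_h=-(|\kappa_\alpha|/h^\alpha)A_\alpha$ is symmetric positive definite. Diagonalizing $M=Q\,\mathrm{diag}(\lambda_1,\dots,\lambda_{N_x-1})\,Q^{\mathrm T}$ with $Q$ orthogonal and each $\lambda_i>0$, I would set
\[
 \Lambda^\alpha_h:=Q\,\mathrm{diag}\bigl(\sqrt{\lambda_1},\dots,\sqrt{\lambda_{N_x-1}}\,\bigr)\,Q^{\mathrm T},
\]
which is symmetric positive definite, nonsingular, and satisfies $\Lambda^\alpha_h\cdot\Lambda^\alpha_h=M=-\nabla^\alpha_h$. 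Then for all grid functions $u,v$,
\[
 -(\nabla^\alpha_h u,v)=(Mu,v)=(\Lambda^\alpha_h\Lambda^\alpha_h u,v)=(\Lambda^\alpha_h u,\Lambda^\alpha_h v),
\]
the last step by symmetry of $\Lambda^\alpha_h$; putting $v=u$ gives $-(\nabla^\alpha_h u,u)=\|\Lambda^\alpha_h u\|^2>0$ whenever $u\ne0$, since $\Lambda^\alpha_h$ is nonsingular. This last paragraph is routine linear algebra, so the whole proof hinges on the negative definiteness of $A_\alpha$, i.e. on the symbol sign estimate of the previous paragraph.
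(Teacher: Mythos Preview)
Your argument is correct. The paper does not supply its own proof of this lemma; it is stated with a citation to \cite{Wang:2015} and used as a black box. Your outline---observe that $-\nabla^\alpha_h=(\kappa_\alpha/h^\alpha)A_\alpha$ with $\kappa_\alpha<0$ for $1<\alpha\le 2$, reduce to the negative definiteness of the symmetric Toeplitz matrix $A_\alpha$, verify that via the generating-function symbol (the WSGD symbol estimate of \cite{Tian:12,Wang:2015}), and then take the spectral square root---is exactly the standard route and is what underlies the cited reference. The only part with real content is the symbol sign estimate, which you correctly flag and attribute; the square-root step and the identity $-(\nabla^\alpha_h u,v)=(\Lambda^\alpha_h u,\Lambda^\alpha_h v)$ are, as you say, routine. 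One small caveat: the strict inequality $-(\nabla^\alpha_h u,u)>0$ should be read for $u\neq 0$, which you already note.
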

\begin{lemma}(Discrete Gronwall Lemma \cite{Quarteroni:08})\label{lemma3.2}
Assume that $\{a_k\}$ and $\{b_k\}$ is a nonnegative sequence, and the sequence $\varphi^k$ satisfies
$$\varphi^0\leq c_0,~ \varphi^k\leq c_0+\sum_{l=0}^{k-1}b_l+\sum_{l=0}^{k-1}a_l\varphi^l, ~~k\geq 1, $$
where $c_0\geq0.$
Then the sequence $\{\varphi^k\}$ satisfies
$$\varphi^k\leq \left( c_0+\sum_{l=0}^{k-1}b_l\right)\exp\left(\sum_{l=0}^{k-1}a_l\right),~~k\geq 1.$$
\end{lemma}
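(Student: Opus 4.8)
The plan is to reduce the inequality to a solvable linear recursion by passing to a majorizing sequence. First I would introduce $\psi^k := c_0 + \sum_{l=0}^{k-1} b_l + \sum_{l=0}^{k-1} a_l \varphi^l$ for $k \geq 1$, together with $\psi^0 := c_0$, so that the hypothesis reads exactly $\varphi^k \leq \psi^k$ for every $k \geq 0$ (the empty-sum convention makes this consistent at $k=0$). Since $a_k, b_k \geq 0$ and $\varphi^l \geq 0$, the sequence $\{\psi^k\}$ is nondecreasing, and subtracting consecutive terms gives $\psi^{k+1} - \psi^k = b_k + a_k \varphi^k \leq b_k + a_k \psi^k$, that is,
$$\psi^{k+1} \leq (1 + a_k)\,\psi^k + b_k, \qquad k \geq 0.$$

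Next I would unwind this first-order linear recurrence. A straightforward induction on $k$ yields
$$\psi^k \leq \left(\prod_{l=0}^{k-1}(1+a_l)\right) c_0 + \sum_{j=0}^{k-1}\left(\prod_{l=j+1}^{k-1}(1+a_l)\right) b_j, \qquad k \geq 1,$$
with the usual convention that an empty product equals $1$. Because each factor satisfies $1 + a_l \geq 1$, every partial product $\prod_{l=j+1}^{k-1}(1+a_l)$ is bounded above by the full product $\prod_{l=0}^{k-1}(1+a_l)$, so the right-hand side is at most $\left(\prod_{l=0}^{k-1}(1+a_l)\right)\left(c_0 + \sum_{j=0}^{k-1} b_j\right)$.

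Finally I would apply the elementary inequality $1 + x \leq e^{x}$, valid for all $x \geq 0$, factor by factor, to obtain $\prod_{l=0}^{k-1}(1+a_l) \leq \exp\left(\sum_{l=0}^{k-1} a_l\right)$. Combining this with $\varphi^k \leq \psi^k$ delivers the claimed bound $\varphi^k \leq \left(c_0 + \sum_{l=0}^{k-1} b_l\right)\exp\left(\sum_{l=0}^{k-1} a_l\right)$ for $k \geq 1$. The only part that calls for any care --- and really the sole obstacle in an otherwise routine argument --- is the bookkeeping of the empty-sum and empty-product conventions at the endpoints: verifying the base case $k=1$ directly, and checking that the index shift in $\prod_{l=j+1}^{k-1}$ behaves correctly when $j = k-1$. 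Everything else is the monotonicity of $\{\psi^k\}$ together with the scalar estimate $1 + x \leq e^{x}$.
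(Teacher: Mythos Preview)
The paper does not prove this lemma; it merely cites it from Quarteroni--Valli and uses it as a black box. Your argument is the standard one and is correct: pass to the majorant $\psi^k$, derive the one-step recursion $\psi^{k+1}\leq(1+a_k)\psi^k+b_k$, unwind it into a product form, and finish with $1+x\leq e^x$.

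One small remark: your sentence ``Since $a_k,b_k\geq 0$ and $\varphi^l\geq 0$, the sequence $\{\psi^k\}$ is nondecreasing'' invokes nonnegativity of $\varphi^l$, which is not among the stated hypotheses of the lemma. Fortunately you never actually use monotonicity of $\psi^k$; the recursion $\psi^{k+1}-\psi^k=b_k+a_k\varphi^k\leq b_k+a_k\psi^k$ follows directly from $a_k\geq 0$ and $\varphi^k\leq\psi^k$, regardless of sign. So the proof stands; just drop the monotonicity clause (or note that in the paper's applications $\varphi^k=E^k\geq 0$ anyway).
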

\begin{lemma}\cite[p.\,141]{Laub:05}\label{lemma3.3}
Let $A \in \mathbb{R}^{n\times n}$ have eigenvalues $\{\lambda_i\}_{i=1}^n$ and $B \in \mathbb{R}^{m\times m}$ have eigenvalues $\{\mu_j\}_{j=1}^m$.
Then the $mn$ eigenvalues of $A \otimes B$ are
\begin{equation*}
  \lambda_1\mu_1,\ldots,\lambda_1\mu_m, \lambda_2\mu_1,\ldots,\lambda_2\mu_m,\ldots,\lambda_n\mu_1\ldots,\lambda_n\mu_m.
\end{equation*}
\end{lemma}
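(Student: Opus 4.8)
The plan is to reduce the general case to the triangular case by Schur's unitary triangularization. Since the eigenvalues may be complex even when $A$ and $B$ are real, I would work over $\mathbb{C}$: by the Schur decomposition there exist unitary matrices $U\in\mathbb{C}^{n\times n}$ and $V\in\mathbb{C}^{m\times m}$ with $U^{*}AU=T_A$ and $V^{*}BV=T_B$ both upper triangular, the diagonal of $T_A$ listing $\lambda_1,\ldots,\lambda_n$ and the diagonal of $T_B$ listing $\mu_1,\ldots,\mu_m$, each repeated according to algebraic multiplicity.

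Next I would exploit the mixed-product property of the Kronecker product, $(P\otimes Q)(R\otimes S)=(PR)\otimes(QS)$ whenever the ordinary products make sense. From it one gets $(U\otimes V)^{*}=U^{*}\otimes V^{*}$ and that a Kronecker product of unitary matrices is again unitary, and then
\[
(U\otimes V)^{*}(A\otimes B)(U\otimes V)=(U^{*}AU)\otimes(V^{*}BV)=T_A\otimes T_B .
\]
Hence $A\otimes B$ is unitarily similar to $T_A\otimes T_B$, so the two matrices share the same eigenvalues with the same multiplicities.

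Finally I would observe that $T_A\otimes T_B$ is block upper triangular, its $(i,k)$ block being $(T_A)_{ik}\,T_B$; each diagonal block $(T_A)_{ii}\,T_B$ is itself upper triangular, so $T_A\otimes T_B$ is upper triangular and its diagonal entries are precisely the $mn$ products $(T_A)_{ii}(T_B)_{jj}=\lambda_i\mu_j$, appearing in the order $\lambda_1\mu_1,\ldots,\lambda_1\mu_m,\lambda_2\mu_1,\ldots,\lambda_n\mu_m$ dictated by the lexicographic ordering of the Kronecker product. Since the eigenvalues of an upper triangular matrix are exactly its diagonal entries, this yields the claim.

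The one delicate point is tracking multiplicities correctly, and that is exactly why I would route the argument through Schur triangularization rather than the quicker eigenvector computation $(A\otimes B)(x\otimes y)=(Ax)\otimes(By)=\lambda\mu\,(x\otimes y)$: the latter only exhibits each $\lambda_i\mu_j$ as \emph{some} eigenvalue, and without a separate dimension/multiplicity count it neither rules out extra eigenvalues nor covers the case in which $A$ or $B$ fails to be diagonalizable. The triangularization approach sidesteps both issues and is essentially the proof given in \cite[p.\,141]{Laub:05}.
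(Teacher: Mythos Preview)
Your proof is correct. Note, however, that the paper itself does not supply a proof of this lemma at all: it is simply quoted from the textbook \cite[p.\,141]{Laub:05} and used as a black box. Your Schur-triangularization argument is the standard one (and, as you observe, essentially the one in Laub), so there is nothing to compare against in the paper proper.
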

\begin{lemma}\cite[p.\,140]{Laub:05}\label{lemma3.4}
Let $A \in \mathbb{R}^{m\times n}$, $B \in \mathbb{R}^{r\times s}$, $C \in \mathbb{R}^{n\times p}$, and $D \in \mathbb{R}^{s\times t}$.
Then
\begin{equation*}
  (A \otimes B)(C \otimes D)=AC \otimes  BD \quad (\in \mathbb{R}^{mr\times pt}).
\end{equation*}
Moreover, for all $A$ and $B$, $(A \otimes B)^T=A^T\otimes B^T$.
\end{lemma}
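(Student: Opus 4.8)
The plan is to work directly from the block-matrix definition of the Kronecker product and to verify both identities block by block, since each reduces to an elementary scalar fact about matrix multiplication and transposition. Write $A=(a_{ij})\in\mathbb{R}^{m\times n}$, $B\in\mathbb{R}^{r\times s}$, $C=(c_{jk})\in\mathbb{R}^{n\times p}$, $D\in\mathbb{R}^{s\times t}$, so that $A\otimes B$ is the $mr\times ns$ matrix whose $(i,j)$ block (of size $r\times s$) is $a_{ij}B$, and likewise $C\otimes D$ has $(j,k)$ block $c_{jk}D$.

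For the mixed-product rule $(A\otimes B)(C\otimes D)=AC\otimes BD$, I would first note that the natural partitions are conformable: $A\otimes B$ has $m\times n$ blocks, $C\otimes D$ has $n\times p$ blocks, and every inner product of blocks is an $(r\times s)$ times $(s\times t)$ multiplication, hence well defined. Ordinary block multiplication then gives the $(i,k)$ block of the product as
$\sum_{j=1}^{n}(a_{ij}B)(c_{jk}D)=\Big(\sum_{j=1}^{n}a_{ij}c_{jk}\Big)BD=(AC)_{ik}\,BD$,
which is exactly the $(i,k)$ block of $AC\otimes BD$. Since all blocks coincide, the two matrices are equal.

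For the transpose identity $(A\otimes B)^{T}=A^{T}\otimes B^{T}$, I would again argue blockwise. Transposing an $m\times n$ block matrix sends the block in position $(i,j)$ to the transpose of the block formerly in position $(j,i)$; applied to $A\otimes B$, whose $(j,i)$ block is $a_{ji}B$, this yields $(a_{ji}B)^{T}=a_{ji}B^{T}=(A^{T})_{ij}B^{T}$ as the $(i,j)$ block of $(A\otimes B)^{T}$. That is precisely the $(i,j)$ block of $A^{T}\otimes B^{T}$, so the identity follows.

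The only genuine hazard here is the index- and dimension-bookkeeping: one must fix the block partitions correctly (so that the $n$ block-columns of $A\otimes B$ line up with the $n$ block-rows of $C\otimes D$, and each elementary block product is $(r\times s)\cdot(s\times t)$) before invoking block multiplication. Once the partitions are pinned down, both claims collapse to the scalar identities $\sum_{j}a_{ij}c_{jk}=(AC)_{ik}$ and $(A^{T})_{ij}=a_{ji}$, so no further computation is needed.
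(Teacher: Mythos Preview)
Your proof is correct and is the standard blockwise verification of both identities. Note, however, that the paper does not give its own proof of this lemma: it is simply quoted from Laub's textbook \cite[p.\,140]{Laub:05}, so there is no in-paper argument to compare against. Your direct argument from the block definition of the Kronecker product is exactly what one finds in such references.
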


\begin{lemma}\label{lemma3.5}
Let $\mathcal{A}_{x} = I \otimes A_{\alpha}$ and $\mathcal{A}_{y} =A_{\beta}  \otimes I$ be defined by (\ref{2.26}). Then
\begin{equation*}
\mathcal{A}_x\mathcal{A}_y=\mathcal{A}_y\mathcal{A}_x,
~~\Lambda_{x}\mathcal{A}_y= \mathcal{A}_y\Lambda_{x}
{\rm~~and~~}
\Lambda_{x}\Lambda_{y}=\Lambda_{y}\Lambda_{x}
{\rm~~with~~}   -A_{\alpha}=\Lambda_{\alpha}\cdot\Lambda_{\alpha},
~~ -A_{\beta}=\Lambda_{\beta}\cdot\Lambda_{\beta}
\end{equation*}
where we denote  $\Lambda_{x}: = I \otimes \Lambda_{\alpha}$
 and $\Lambda_{y}: =\Lambda_{\beta}  \otimes I$.

\end{lemma}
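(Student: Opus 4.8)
The plan is to derive all three commutation identities from a single structural tool—the mixed-product rule $(A\otimes B)(C\otimes D)=AC\otimes BD$ of Lemma~\ref{lemma3.4}—after first checking that the square roots $\Lambda_\alpha,\Lambda_\beta$ are well defined. Once that is in place the argument is purely mechanical, because the $x$-direction operator $A_\alpha$ and the $y$-direction operator $A_\beta$ sit in different tensor factors.

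\textbf{Step 1: existence of $\Lambda_\alpha$ and $\Lambda_\beta$, and bookkeeping of the identity blocks.} First I would record that $A_\alpha=B_\alpha+B_\alpha^{\rm T}$ is symmetric and that $-A_\alpha$ is positive definite. Indeed, by (\ref{2.3}) one has $\nabla^\alpha_h=\frac{-\kappa_\alpha}{h^\alpha}A_\alpha$ with $\frac{-\kappa_\alpha}{h^\alpha}>0$ for $1<\alpha\le2$, so Lemma~\ref{lemma3.1}, which asserts that $-\nabla^\alpha_h$ is symmetric positive definite with $-\nabla^\alpha_h=\Lambda^\alpha_h\cdot\Lambda^\alpha_h$, forces $-A_\alpha$ to be symmetric positive definite as well. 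By the spectral theorem $-A_\alpha$ therefore possesses a unique symmetric positive definite square root, which I denote $\Lambda_\alpha$, so that $-A_\alpha=\Lambda_\alpha\cdot\Lambda_\alpha$; the same argument yields $\Lambda_\beta$ with $-A_\beta=\Lambda_\beta\cdot\Lambda_\beta$. I would also pin down the identity blocks: in $\mathcal{A}_x=I\otimes A_\alpha$ and $\Lambda_x=I\otimes\Lambda_\alpha$ the symbol $I$ is the $(N_y-1)\times(N_y-1)$ identity, whereas in $\mathcal{A}_y=A_\beta\otimes I$ and $\Lambda_y=\Lambda_\beta\otimes I$ it is the $(N_x-1)\times(N_x-1)$ identity; since $A_\alpha,\Lambda_\alpha\in\mathbb{R}^{(N_x-1)\times(N_x-1)}$ and $A_\beta,\Lambda_\beta\in\mathbb{R}^{(N_y-1)\times(N_y-1)}$, every Kronecker product occurring below is between compatibly sized blocks, so Lemma~\ref{lemma3.4} applies verbatim.

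\textbf{Step 2: the three computations.} I would use the elementary consequence of Lemma~\ref{lemma3.4} that $(I\otimes P)(Q\otimes I)=(IQ)\otimes(PI)=Q\otimes P=(QI)\otimes(IP)=(Q\otimes I)(I\otimes P)$ for all $P,Q$ of the appropriate sizes. Taking $P=A_\alpha$, $Q=A_\beta$ gives $\mathcal{A}_x\mathcal{A}_y=A_\beta\otimes A_\alpha=\mathcal{A}_y\mathcal{A}_x$; replacing $A_\alpha$ by $\Lambda_\alpha$ in the left factor gives $\Lambda_x\mathcal{A}_y=A_\beta\otimes\Lambda_\alpha=\mathcal{A}_y\Lambda_x$; and replacing $A_\beta$ by $\Lambda_\beta$ as well gives $\Lambda_x\Lambda_y=\Lambda_\beta\otimes\Lambda_\alpha=\Lambda_y\Lambda_x$. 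Together with the relations $-A_\alpha=\Lambda_\alpha\cdot\Lambda_\alpha$, $-A_\beta=\Lambda_\beta\cdot\Lambda_\beta$ from Step~1, this is the whole statement.

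\textbf{Where the difficulty is—or is not.} There is essentially no analytic obstacle: the lemma is a conceptual fact (discretizing in $x$ and discretizing in $y$ commute because they act on independent tensor factors, and this is untouched by the variable coefficients $a,b$) rather than a computational one. The only two points requiring a word of care are (i) the well-posedness of $\Lambda_\alpha,\Lambda_\beta$, handled in Step~1 through Lemma~\ref{lemma3.1}, and (ii) keeping the sizes of the identity blocks straight so that the mixed-product rule is used in exactly the form above—one should not, for instance, claim that $\Lambda_\alpha$ commutes with some $A_\alpha$-sized matrix residing in the same factor, and indeed no such claim is needed. With those in hand the lemma follows in two lines, and it is precisely this commutativity that later makes the D--ADI splitting (\ref{2.25})--(\ref{2.27}) and the ensuing energy and priori error estimates legitimate.
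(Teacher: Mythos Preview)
Your proposal is correct and follows essentially the same route as the paper's own proof: invoke Lemma~\ref{lemma3.1} (equivalently \cite{Wang:2015}) to obtain the symmetric positive definite square roots $\Lambda_\alpha,\Lambda_\beta$ of $-A_\alpha,-A_\beta$, and then apply the mixed-product rule of Lemma~\ref{lemma3.4} to the Kronecker representations to read off all three commutation identities. Your write-up is more explicit about the sign of $-\kappa_\alpha/h^\alpha$ and the block sizes, but the underlying argument is identical.
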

\begin{proof}
From \cite{Wang:2015} or Lemma \ref{lemma3.1}, there exists  $ -A_{\alpha}=\Lambda_{\alpha}\cdot\Lambda_{\alpha}$
and $ -A_{\beta}=\Lambda_{\beta}\cdot\Lambda_{\beta}$, since $-A_\alpha$ and $-A_\beta$ are the symmetric positive definite matrices.
Taking   $\Lambda_{x}: = I \otimes \Lambda_{\alpha}$
 and $\Lambda_{y}: =\Lambda_{\beta}  \otimes I$ and using Lemma \ref{lemma3.4}, the results are obtained.
\end{proof}
\begin{lemma}\label{lemma3.6}
Let $\nabla^\alpha_{h_x}$ and $\nabla^\beta_{h_y}$  be given in (\ref{2.18}) with $1<\alpha,\beta\leq2$.
Then there exist the  symmetric positive definite  matrices $\Lambda_{h_x}^\alpha $ and $\Lambda_{h_y}^\beta $, respectively,  such that
\begin{equation*}
-(\nabla^\alpha_{h_x} \mathbf{U},\mathbf{U})>0 \quad and \quad -(\nabla^\alpha_{h_x} \mathbf{U},\mathbf{V})
=(\Lambda_{h_x}^\alpha \mathbf{U}  , \Lambda_{h_x}^\alpha \mathbf{V})
~~{\rm with}~~-\nabla^\alpha_{h_x}=\Lambda_{h_x}^\alpha\cdot\Lambda_{h_x}^\alpha,
\end{equation*}
and
\begin{equation*}
-(\nabla^\beta_{h_y} \mathbf{U},\mathbf{U})>0 \quad and \quad -(\nabla^\beta_{h_y} \mathbf{U},\mathbf{V})
=(\Lambda_{h_y}^\beta \mathbf{U}  , \Lambda_{h_y}^\beta \mathbf{V})
~~{\rm with}~~-\nabla^\beta_{h_y}=\Lambda_{h_y}^\beta\cdot\Lambda_{h_y}^\beta.
\end{equation*}
\end{lemma}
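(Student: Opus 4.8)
The plan is to lift the one-dimensional factorization of Lemma~\ref{lemma3.1} (equivalently, the matrix square roots $\Lambda_\alpha,\Lambda_\beta$ produced in Lemma~\ref{lemma3.5}) to the two-dimensional grid by means of the Kronecker product. First I would record that, on the vector $\mathbf{U}$ of two-dimensional unknowns, (\ref{2.3}) and (\ref{2.26}) give $\nabla^\alpha_{h_x}\mathbf{U}=-\frac{\kappa_\alpha}{h_x^\alpha}\mathcal{A}_x\mathbf{U}$ with $\mathcal{A}_x=I\otimes A_\alpha$, and $\nabla^\beta_{h_y}\mathbf{U}=-\frac{\kappa_\beta}{h_y^\beta}\mathcal{A}_y\mathbf{U}$ with $\mathcal{A}_y=A_\beta\otimes I$; since $1<\alpha,\beta\le 2$ one has $\cos(\alpha\pi/2)<0$, so $-\kappa_\alpha>0$ and $-\kappa_\beta>0$ and the scalar factors introduced below are real and positive.

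Next I would \emph{define} the candidate square roots
\[
\Lambda_{h_x}^\alpha:=\sqrt{\tfrac{-\kappa_\alpha}{h_x^\alpha}}\,(I\otimes\Lambda_\alpha),
\qquad
\Lambda_{h_y}^\beta:=\sqrt{\tfrac{-\kappa_\beta}{h_y^\beta}}\,(\Lambda_\beta\otimes I),
\]
and verify three things. (i) Symmetry: by Lemma~\ref{lemma3.4}, $(I\otimes\Lambda_\alpha)^T=I\otimes\Lambda_\alpha$. (ii) Positive definiteness: by Lemma~\ref{lemma3.3} the eigenvalues of $I\otimes\Lambda_\alpha$ are the products of the unit eigenvalues of $I$ with the positive eigenvalues of $\Lambda_\alpha$, hence positive, so $\Lambda_{h_x}^\alpha$ is symmetric positive definite, and in particular nonsingular. (iii) The factorization: using the mixed-product rule of Lemma~\ref{lemma3.4} and $-A_\alpha=\Lambda_\alpha\cdot\Lambda_\alpha$,
\[
\Lambda_{h_x}^\alpha\cdot\Lambda_{h_x}^\alpha
=\tfrac{-\kappa_\alpha}{h_x^\alpha}\,(I\otimes\Lambda_\alpha^2)
=\tfrac{-\kappa_\alpha}{h_x^\alpha}\,\big(I\otimes(-A_\alpha)\big)
=\tfrac{\kappa_\alpha}{h_x^\alpha}\,\mathcal{A}_x
=-\nabla^\alpha_{h_x},
\]
and the identical computation with $\Lambda_\beta,\mathcal{A}_y$ yields $\Lambda_{h_y}^\beta\cdot\Lambda_{h_y}^\beta=-\nabla^\beta_{h_y}$.

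Finally, the two displayed relations in the statement follow at once from (iii) and the symmetry of $\Lambda_{h_x}^\alpha$: one has $-(\nabla^\alpha_{h_x}\mathbf{U},\mathbf{V})=(\Lambda_{h_x}^\alpha\Lambda_{h_x}^\alpha\mathbf{U},\mathbf{V})=(\Lambda_{h_x}^\alpha\mathbf{U},\Lambda_{h_x}^\alpha\mathbf{V})$, and with $\mathbf{V}=\mathbf{U}$ this equals $\|\Lambda_{h_x}^\alpha\mathbf{U}\|^2>0$ whenever $\mathbf{U}\neq\mathbf{0}$, since $\Lambda_{h_x}^\alpha$ is nonsingular; the $y$-direction claim is verbatim with $\beta,h_y,\Lambda_{h_y}^\beta$. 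I do not expect a genuine obstacle: the content is exactly Lemma~\ref{lemma3.1} tensored up to two dimensions, and the only points requiring care are keeping track of the sign of $\kappa_\alpha$ (negative on $(1,2]$, so that $\sqrt{-\kappa_\alpha/h_x^\alpha}$ is real) and applying Lemmas~\ref{lemma3.3}--\ref{lemma3.4} correctly to carry symmetry and positive definiteness through the Kronecker lifting.
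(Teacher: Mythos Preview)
Your proposal is correct and follows essentially the same approach as the paper: both identify $\nabla^\alpha_{h_x}$ with the matrix $\frac{-\kappa_\alpha}{h_x^\alpha}\mathcal{A}_x$ and invoke Lemmas~\ref{lemma3.3}--\ref{lemma3.5} to push the one-dimensional symmetric positive definiteness through the Kronecker product. The only difference is that the paper simply asserts $-\nabla^\alpha_{h_x}$ is symmetric positive definite (hence admits a square root), whereas you explicitly construct $\Lambda_{h_x}^\alpha=\sqrt{-\kappa_\alpha/h_x^\alpha}\,(I\otimes\Lambda_\alpha)$ and verify the factorization directly; your version is a bit more informative but not a genuinely different route.
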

\begin{proof}
According to (\ref{2.18}) and (\ref{2.26}), it implied that
\begin{equation*}
\begin{split}
&\nabla^\alpha_{h_x} \mathbf{U}
=-\kappa_\alpha \left(\delta_{x,+}^\alpha +\delta_{x,-}^\alpha  \right)\mathbf{U}  =  \frac{-\kappa_\alpha}{h_x^\alpha}\mathcal{A}_x \mathbf{U}.
\end{split}
\end{equation*}
From Lemmas \ref{lemma3.3} and \ref{lemma3.5}, we know  that $\mathcal{A}_{x} = I \otimes A_{\alpha}$ is a symmetric negative definite,
which leads to  $-\nabla^\alpha_{h_x}$ (or $-\nabla^\beta_{h_y}$ ) is the symmetric positive definite.
The proof is completed.
\end{proof}

\subsection{Convergence and stability for one-dimensional space-Riesz fractional wave equation}
First, we introduce some relevant notations and properties of discretized inner product given in \cite{Hu:99,Sun:2005}.
Denote  $u^k=\{u_i^k| 0 \leq i \leq N_x, 0\leq k\leq N_t \}$
and $v^k=\{v_i^k| 0 \leq i \leq N_x, 0\leq k\leq N_t \}$, which are grid functions. And
\begin{equation}\label{3.1}
\begin{split}
&u_{\overline{t},i}^k=(u_{i}^{k} - u_{i}^{k-1})/\tau,
~~~~(u^k,v^k)=h\sum_{i=1}^{N_x-1}u_i^kv_i^k, ~~~~~||u^k||=(u^k,u^k)^{1/2}.
\end{split}
\end{equation}

\begin{lemma}\label{lemma3.7}
Let $\frac{1}{4}\leq \theta \leq 1$, $1< \alpha \leq 2$ and $\{u_{i}^k\}$ be the solution of the  difference scheme
\begin{equation*}
  \delta^2_tu^k_i=\theta a_i\nabla^\alpha_hu^{k+1}_i+(1-2\theta)
  a_i\nabla^\alpha_hu^k_i+\theta a_i\nabla^\alpha_hu^{k-1}_i+f_i^k
\end{equation*}
with the initial conditions and the Dirichlet boundary conditions
\begin{equation*}
\begin{split}
&u_i^0=\varphi_i, ~~0\leq i\leq N_x,\\
&u_i^1=\psi_i,~~0\leq i\leq N_x,\\
&u_0^k=0, ~~u_{N_x}^k=0,~~ 0\leq k\leq N_t-1.
\end{split}
\end{equation*}
 Then
\begin{equation*}
\begin{split}
E^k \leq e^{\frac{3}{2}k\tau}\left[  E^0+\frac{3}{2}\tau\sum_{l=1}^k||f^l||^2\right],
\end{split}
\end{equation*}
where the energy norm is defined by 
\begin{equation*}
\begin{split}
   E^k
   ={\parallel u^{k+1}_{\bar{t}}\parallel}^2+\frac{1}{4}\parallel\sqrt{a}(\Lambda_h^\alpha  u^{k+1}+\Lambda_h^\alpha  u^{k})
   \parallel^2+\frac{1}{4}(4\theta-1)\parallel\sqrt{a}(\Lambda_h^\alpha  u^{k+1}-\Lambda_h^\alpha  u^{k})\parallel^2.
\end{split}
\end{equation*}
\end{lemma}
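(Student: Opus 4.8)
The plan is to run a discrete energy argument of the type used for the classical wave equation: take the inner product $(\cdot,\cdot)$ from \eqref{3.1} of the scheme $\delta_t^2 u^k = D\nabla_h^\alpha v^k + f^k$ with the velocity increment $u^{k+1}-u^{k-1}$, where $v^k = \theta u^{k+1}+(1-2\theta)u^k+\theta u^{k-1}$ and $D=\mathrm{diag}(a_1,\dots,a_{N_x-1})$, then estimate the resulting terms by Cauchy--Schwarz and Young's inequality, and close with the discrete Gronwall Lemma \ref{lemma3.2}. For the time term, writing $u^{k+1}-u^{k-1}=\tau(u^{k+1}_{\bar t}+u^k_{\bar t})$ and $\delta_t^2 u^k=\tau^{-1}(u^{k+1}_{\bar t}-u^k_{\bar t})$, the cross terms cancel and $(\delta_t^2 u^k, u^{k+1}-u^{k-1})=\|u^{k+1}_{\bar t}\|^2-\|u^k_{\bar t}\|^2$.

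For the fractional term I would use the symmetry of $D$, the self-adjointness of $\nabla_h^\alpha$, and the factorization $-\nabla_h^\alpha=\Lambda_h^\alpha\Lambda_h^\alpha$ with $\Lambda_h^\alpha$ symmetric positive definite from Lemma \ref{lemma3.1} to write $(D\nabla_h^\alpha v^k, u^{k+1}-u^{k-1})$ as $-(D\Lambda_h^\alpha v^k,\Lambda_h^\alpha(u^{k+1}-u^{k-1}))$ minus the term $(\Lambda_h^\alpha v^k,(\Lambda_h^\alpha D-D\Lambda_h^\alpha)(u^{k+1}-u^{k-1}))$, which measures the failure of the diagonal coefficient matrix $D$ to commute with $\Lambda_h^\alpha$ and vanishes identically when $a$ is constant. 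Setting $w^j=\Lambda_h^\alpha u^j$, so that $\Lambda_h^\alpha v^k=\theta w^{k+1}+(1-2\theta)w^k+\theta w^{k-1}$ and $\Lambda_h^\alpha(u^{k+1}-u^{k-1})=w^{k+1}-w^{k-1}$, an elementary identity valid in any inner product rewrites $\langle\theta w^{k+1}+(1-2\theta)w^k+\theta w^{k-1},\,w^{k+1}-w^{k-1}\rangle$ as the backward $k$-difference of $\tfrac14\|w^{k+1}+w^k\|^2+\tfrac14(4\theta-1)\|w^{k+1}-w^k\|^2$; applied in the weighted product $\langle z_1,z_2\rangle=(Dz_1,z_2)$ this gives $-(D\Lambda_h^\alpha v^k,\Lambda_h^\alpha(u^{k+1}-u^{k-1}))=-(G^k-G^{k-1})$, where $G^k$ is exactly the $\sqrt a\,\Lambda_h^\alpha$-part of $E^k$ (which is also what fixes the coefficients $\tfrac14$ and $\tfrac14(4\theta-1)$ in the definition of $E^k$). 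Collecting these computations yields the energy identity $E^k-E^{k-1}=(f^k,u^{k+1}-u^{k-1})-(\Lambda_h^\alpha v^k,(\Lambda_h^\alpha D-D\Lambda_h^\alpha)(u^{k+1}-u^{k-1}))$.

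Next I would bound the right-hand side. The forcing term $\tau(f^k,u^{k+1}_{\bar t}+u^k_{\bar t})$ is handled by Cauchy--Schwarz and Young's inequality; the assumption $\theta\ge\tfrac14$ makes the last two summands of $E^j$ nonnegative, so $\|u^{j+1}_{\bar t}\|^2\le E^j$, and for $\tfrac14\le\theta\le1$ the quantity $E^k$ together with $E^{k-1}$ also controls $\|\sqrt a\,\Lambda_h^\alpha v^k\|^2$ (write $v^k$ as a combination of the sums $u^{k+1}+u^k$, $u^k+u^{k-1}$ and, when $\theta>\tfrac14$, the differences), hence $\|\Lambda_h^\alpha v^k\|^2\le C(E^k+E^{k-1})$ since $a\ge a_0>0$. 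The commutator term is then estimated by Cauchy--Schwarz together with a bound on $\|\Lambda_h^\alpha D-D\Lambda_h^\alpha\|$ that is uniform in $h$ (heuristically, $\Lambda_h^\alpha$ is a discrete operator of order $\alpha/2\le1$, so its commutator with multiplication by the smooth coefficient $a$ has nonpositive order), producing a bound of the form $C\tau(E^k+E^{k-1})$. The outcome is $E^k-E^{k-1}\le c_1\tau\|f^k\|^2+c_2\tau(E^k+E^{k-1})$; absorbing the $E^k$ on the right for $\tau$ small, summing over $k$, and applying Lemma \ref{lemma3.2} with $\varphi^k=E^k$ then gives $E^k\le e^{\frac32 k\tau}\big[E^0+\tfrac32\tau\sum_{l=1}^k\|f^l\|^2\big]$, with the precise constant $\tfrac32$ coming from the choice of Young weights and the Gronwall bookkeeping.

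I expect the genuine obstacle to be exactly the variable-coefficient remainder $(\Lambda_h^\alpha v^k,(\Lambda_h^\alpha D-D\Lambda_h^\alpha)(u^{k+1}-u^{k-1}))$. For constant $a$ it is absent and the energy identity is an exact telescoping, so the estimate is essentially classical; in the genuinely variable case everything rests on a uniform-in-$h$ commutator estimate for $\Lambda_h^\alpha$ against multiplication by $a(x)$, and that is the one point where the regularity of the coefficient is really needed.
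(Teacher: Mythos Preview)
Your approach is the paper's: test the scheme against $u^{k+1}-u^{k-1}$, get $\|u^{k+1}_{\bar t}\|^2-\|u^k_{\bar t}\|^2$ from the time term, use $-\nabla_h^\alpha=(\Lambda_h^\alpha)^2$ together with the quadratic identity you quote to telescope the spatial part into the $\sqrt a\,\Lambda_h^\alpha$-energy, estimate $(f^k,u^{k+1}-u^{k-1})\le\tfrac{\tau}{2}(\|u^{k+1}_{\bar t}\|^2+\|u^{k}_{\bar t}\|^2)+\tau\|f^k\|^2\le\tfrac{\tau}{2}(E^k+E^{k-1})+\tau\|f^k\|^2$, and finish with $(1-\tfrac{\tau}{2})E^k\le(1+\tfrac{\tau}{2})E^{k-1}+\tau\|f^k\|^2$ and Lemma~\ref{lemma3.2} (the paper imposes $\tau\le 2/3$ to extract the constant $\tfrac32$). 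The algebraic rewriting that produces the coefficients $\tfrac14$ and $\tfrac14(4\theta-1)$ in $E^k$ is done in the paper exactly as you describe.

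The one place you depart from the paper is the commutator term. The paper carries none: it invokes Lemma~\ref{lemma3.1} as if $(a\nabla_h^\alpha w,z)=-(a\Lambda_h^\alpha w,\Lambda_h^\alpha z)$ held identically, so the energy relation $E^k-E^{k-1}=(f^k,u^{k+1}-u^{k-1})$ is exact and no estimate on $\Lambda_h^\alpha D-D\Lambda_h^\alpha$ ever appears. Your extra machinery (bounding $\|\Lambda_h^\alpha v^k\|$ by $E^k+E^{k-1}$, the heuristic order-counting for the commutator) is therefore absent from the paper's argument, and the clean constant $\tfrac32$ in the stated inequality depends on there being no such remainder. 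Your caution about this step is mathematically well placed---the identity as written does require $D$ and $\Lambda_h^\alpha$ to commute---but the paper's proof simply does not engage with it.
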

\begin{proof}
Multiplying   (\ref{2.10}) by  $h( u^{k+1}_i- u^{k-1}_i)$, respectively,  it yields
\begin{equation*}
\begin{split}
   &{\delta^2_t u^k_i\cdot\left[h( u^{k+1}_i- u^k_i)+h( u^k_i- u^{k-1}_i)\right]}
   =h( u_{\bar{t},i}^{k+1})^2-h( u_{\bar{t},i}^k)^2,
\end{split}
\end{equation*}
  and
\begin{equation*}
  \left[\theta a_i\nabla^\alpha_h u^{k+1}_i+(1-2\theta)a_i
  \nabla^\alpha_h u^k_i+\theta a_i\nabla^\alpha_h u^{k-1}_i+f_i^k\right]\cdot h( u_i^{k+1}- u_i^{k-1}).
\end{equation*}
Then summing up for $i$ from 1 to $N_x-1$ for the  above equations, respectively, there exists
\begin{equation}\label{3.2}
   \sum_{i=1}^{N_x-1}\left[h( u_{\bar{t},i}^{k+1})^2-h( u_{\bar{t},i}^k)^2\right]
   ={\parallel  u^{k+1}_{\bar{t}}\parallel}^2- {\parallel  u^{k}_{\bar{t}}\parallel}^2,
\end{equation}
and
\begin{equation}\label{3.3}
\begin{split}
  &\sum_{i=1}^{N_x-1}{\left[\theta a_i\nabla^\alpha_h u^{k+1}_i
  +(1-2\theta)a_i\nabla^\alpha_h u^k_i+\theta a_i\nabla^\alpha_h u^{k-1}_i+f_i^k\right]\cdot h( u_i^{k+1}- u_i^{k-1})}\\
  &=I_1+I_2+(f^{k},u^{k+1}-u^{k-1}),
\end{split}
\end{equation}
where
\begin{equation*}
\begin{split}
   I_1
   = \theta(a\nabla^\alpha_h  u^{k+1}+a\nabla^\alpha_h  u^{k-1}, u^{k+1}- u^{k-1}),
  ~~ I_2
   = (1-2\theta)(a\nabla^\alpha_h  u^{k}, u^{k+1}- u^{k-1}).
\end{split}
\end{equation*}
According to Lemma  \ref{lemma3.1}, which leads to
\begin{equation*}
\begin{split}
  I_1
  &=-\theta\left[a\Lambda^\alpha _h( u^{k+1}+ u^{k-1}),\Lambda^\alpha _h( u^{k+1}- u^{k-1})\right]
  =-\theta\left(\parallel\sqrt{a}\Lambda^\alpha _h u^{k+1}\parallel^2-\parallel\sqrt{a}\Lambda^\alpha _h u^{k-1}\parallel^2\right),
\end{split}
\end{equation*}
and
\begin{equation*}
\begin{split}
  I_2
  &=-(1-2\theta)\left[(a\Lambda^\alpha _h u^k,\Lambda^\alpha _h u^{k+1})-(a\Lambda^\alpha _h u^k,\Lambda^\alpha _h u^{k-1})\right]\\
  &=-\frac{(1-2\theta)}{4}\Big[(a\Lambda^\alpha _h u^k+a\Lambda^\alpha _h u^{k+1},\Lambda^\alpha _h u^k+\Lambda^\alpha _h u^{k+1})
  -(a\Lambda^\alpha _h u^k-a\Lambda^\alpha _h u^{k+1},\Lambda^\alpha _h u^k-\Lambda^\alpha _h u^{k+1})\\
   &\quad-(a\Lambda^\alpha _h u^k+a\Lambda^\alpha _h u^{k-1},\Lambda^\alpha _h u^k+\Lambda^\alpha _h u^{k-1})
    +(a\Lambda^\alpha _h u^k-a\Lambda^\alpha _h u^{k-1},\Lambda^\alpha _h u^k-\Lambda^\alpha _h u^{k-1})\Big]\\
  &=-\frac{(1-2\theta)}{4}\Big(\parallel\sqrt{a}(\Lambda^\alpha _h u ^{k+1}
    +\Lambda^\alpha _h u ^k)\parallel^2-\parallel\sqrt{a}(\Lambda^\alpha _h u ^{k+1}-\Lambda^\alpha _h u ^{k})\parallel^2\\
   &\quad-\parallel\sqrt{a}(\Lambda^\alpha _h u ^k+\Lambda^\alpha _h u ^{k-1})\parallel^2+\parallel\sqrt{a}(\Lambda^\alpha _h u
  ^k-\Lambda^\alpha _h u ^{k-1})\parallel^2\Big).
\end{split}
\end{equation*}
Combine (\ref{2.10}), (\ref{3.2}) and (\ref{3.3}), we obtain
\begin{equation}\label{3.4}
     {\parallel  u^{k+1}_{\bar{t}}\parallel}^2- {\parallel  u^{k}_{\bar{t}}\parallel}^2-I_1-I_2=(f^{k},u^{k+1}-u^{k-1}),
\end{equation}
i.e.,
\begin{equation*}
\begin{split}
  &{\parallel  u^{k+1}_{\bar{t}}\parallel}^2+\theta\parallel\sqrt{a}\Lambda_h^\alpha  u^{k+1}\parallel^2
  +\frac{1-2\theta}{4}\left(\parallel\sqrt{a}(\Lambda_h^\alpha  u^{k+1}+\Lambda_h^\alpha  u^{k})\parallel^2
  -\parallel\sqrt{a}(\Lambda_h^\alpha  u^{k+1}-\Lambda_h^\alpha  u^{k})\parallel^2\right)\\
  &={\parallel  u^{k}_{\bar{t}}\parallel}^2+\theta\parallel\sqrt{a}\Lambda_h^\alpha  u^{k-1}\parallel^2
  +\frac{1-2\theta}{4}\left(\parallel\sqrt{a}(\Lambda_h^\alpha  u^{k}+\Lambda_h^\alpha  u^{k-1})\parallel^2
  -\parallel\sqrt{a}(\Lambda_h^\alpha  u^{k}-\Lambda_h^\alpha  u^{k-1})\parallel^2\right)\\
  &\quad+(f^{k},u^{k+1}-u^{k-1}).
\end{split}
\end{equation*}
Adding $\theta\parallel\sqrt{a}\Lambda_h^\alpha  u^{k}\parallel^2$ on both sides of  the above equation, there exists
\begin{equation*}
\begin{split}
  &{\parallel  u ^{k+1}_{\bar{t}}\parallel}^2+\theta(\parallel\sqrt{a}\Lambda_h^\alpha  u ^{k+1}\parallel^2+\parallel\sqrt{a}\Lambda_h^\alpha u ^{k}\parallel^2)\\
  &\quad+\frac{1-2\theta}{4}(\parallel\sqrt{a}(\Lambda_h^\alpha  u^{k+1}+\Lambda_h^\alpha  u^{k})\parallel^2
  -\parallel\sqrt{a}(\Lambda_h^\alpha  u^{k+1}-\Lambda_h^\alpha  u^{k})\parallel^2)\\
  &={\parallel u ^{k}_{\bar{t}}\parallel}^2+\theta(\parallel\sqrt{a}\Lambda_h^\alpha  u ^{k}\parallel^2+\parallel\sqrt{a}\Lambda_h^\alpha u ^{k-1}\parallel^2)\\
  &\quad+\frac{1-2\theta}{4}(\parallel\sqrt{a}(\Lambda_h^\alpha  u^{k}+\Lambda_h^\alpha  u^{k-1})\parallel^2
  -\parallel\sqrt{a}(\Lambda_h^\alpha  u^{k}-\Lambda_h^\alpha  u^{k-1})\parallel^2)
  +(f^{k},u^{k+1}-u^{k-1}).
\end{split}
\end{equation*}
Denoting
\begin{equation*}
\begin{split}
   E^k
   &={\parallel u^{k+1}_{\bar{t}}\parallel}^2+\theta(\parallel\sqrt{a}\Lambda_h^{\frac{\alpha}
   {2}} u^{k+1}\parallel^2+\parallel\sqrt{a}\Lambda_h^\alpha  u^{k}\parallel^2)\\
  &\quad+\frac{1-2\theta}{4}\Big(\parallel\sqrt{a}(\Lambda_h^\alpha  u^{k+1}+\Lambda_h^\alpha u^{k})\parallel^2
  -\parallel\sqrt{a}(\Lambda_h^\alpha  u^{k+1}-\Lambda_h^\alpha  u^{k})\parallel^2\Big),
\end{split}
\end{equation*}
i.e.,
\begin{equation}\label{3.5}
\begin{split}
   E^k
   ={\parallel u^{k+1}_{\bar{t}}\parallel}^2+\frac{1}{4}\parallel\sqrt{a}(\Lambda_h^\alpha  u^{k+1}+\Lambda_h^\alpha  u^{k})
   \parallel^2+\frac{1}{4}(4\theta-1)\parallel\sqrt{a}(\Lambda_h^\alpha  u^{k+1}-\Lambda_h^\alpha  u^{k})\parallel^2,
\end{split}
\end{equation}
where we use
 $$\parallel\sqrt{a}\Lambda_h^\alpha  u^{k}\parallel^2
   +\parallel\sqrt{a}\Lambda_h^\alpha  u^{k-1}\parallel^2
   =\frac{1}{2}\left(\parallel\sqrt{a}(\Lambda_h^\alpha  u^{k}+\Lambda_h^\alpha  u^{k-1})
   \parallel^2+\parallel\sqrt{a}(\Lambda_h^\alpha  u^{k}-\Lambda_h^\alpha  u^{k-1})\parallel^2\right).$$
From
\begin{equation}\label{3.6}
\begin{split}
  (f^{k}, u^{k+1}- u^{k-1})
  &=h\tau\sum_{i=1}^{N_x-1}2f_i^k\left(\frac{ u_i^{k+1}- u_i^{k-1}}{2\tau}\right)\\
  &\leq {h\tau}\sum_{i=1}^{N_x-1}\left[\left(f_i^k\right)^2+\left(\frac{ u_i^{k+1}- u_i^{k}+ u_i^{k}- u_i^{k-1}}{2\tau}\right)^2\right]\\
  &\leq \frac{\tau}{2}\left({\parallel  u^{k+1}_{\bar{t}}\parallel}^2 +{\parallel  u^{k}_{\bar{t}}\parallel}^2\right)+\tau||f^k||^2,
\end{split}
\end{equation}
and (\ref{3.5}),  (\ref{3.4}),  we obtain
\begin{equation*}
\begin{split}
  E^{k}-E^{k-1}=(f^{k}, u^{k+1}- u^{k-1}) \leq \frac{\tau}{2} (E^{k}+E^{k-1}) +\tau||f^k||^2,
\end{split}
\end{equation*}
i.e,
\begin{equation*}
\begin{split}
  \left(1-\frac{\tau}{2}\right)E^{k}\leq \left(1+\frac{\tau}{2}\right)E^{k-1}+\tau||f^k||^2.
\end{split}
\end{equation*}
Therefore, for $\tau\leq 2/3$, it yields
\begin{equation*}
\begin{split}
  E^{k}\leq \left(1+\frac{3\tau}{2}\right)E^{k-1}+\frac{3}{2}\tau||f^k||^2.
\end{split}
\end{equation*}
Using the discrete Gronwall inequality (see Lemma \ref{lemma3.2}), we have
\begin{equation*}
\begin{split}
  E^k \leq e^{\frac{3}{2}k\tau}\left[  E^0+\frac{3}{2}\tau\sum_{l=1}^k||f^l||^2\right].
\end{split}
\end{equation*}
The proof is completed.
\end{proof}
\begin{theorem}\label{theorem:3.1}
Let $u(x_i,t_k)$ be the exact solution of (\ref{2.5}) with $1<\alpha\leq2$, $\frac{1}{4}\leq\theta\leq1$;
$u^k_i$ be the solution of the finite difference scheme (\ref{2.10}) and $e_i^k=u(x_i,t_k)-u^k_i$.  Then
\begin{equation*}
\begin{split}
  E^k=\mathcal{O}(\tau^2+h^2)^2,
\end{split}
\end{equation*}
where  the energy norm is defined by 
\begin{equation*}
\begin{split}
   E^k
   ={\parallel e^{k+1}_{\bar{t}}\parallel}^2+\frac{1}{4}\parallel\sqrt{a}(\Lambda_h^\alpha  e^{k+1}+\Lambda_h^\alpha  e^{k})
   \parallel^2+\frac{1}{4}(4\theta-1)\parallel\sqrt{a}(\Lambda_h^\alpha  e^{k+1}-\Lambda_h^\alpha  e^{k})\parallel^2.
\end{split}
\end{equation*}
\end{theorem}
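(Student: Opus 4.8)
The plan is to mirror the energy argument of Lemma~\ref{lemma3.7}, but applied to the error equation rather than the scheme itself. First I would subtract the discrete scheme \eqref{2.10} from the consistency identity \eqref{2.8}: since $u(x_i,t_k)$ satisfies \eqref{2.8} with local truncation term $R_i^k$ and $u_i^k$ satisfies \eqref{2.10} exactly, the error $e_i^k=u(x_i,t_k)-u_i^k$ satisfies
\begin{equation*}
\delta^2_t e^k_i=\theta a_i\nabla^\alpha_h e^{k+1}_i+(1-2\theta)a_i\nabla^\alpha_h e^k_i+\theta a_i\nabla^\alpha_h e^{k-1}_i+R_i^k,
\end{equation*}
with zero Dirichlet boundary data. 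The initial errors are $e_i^0=0$ (since $u_i^0=\varphi_i$ is taken exactly) and $e_i^1=\mathcal{O}(\tau^3+\tau^2h^2)$ from the truncation analysis of \eqref{2.13}, so $E^0=\mathcal{O}(\tau^2+h^2)^2$ because $E^0$ involves $\|e^1_{\bar t}\|^2=\|e^1\|^2/\tau^2$, $\|e^0\|=0$ and the $\Lambda_h^\alpha$-terms in $e^0,e^1$ which are likewise controlled.

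Next I would invoke Lemma~\ref{lemma3.7} verbatim, now with $f_i^k$ replaced by $R_i^k$, to get
\begin{equation*}
E^k\leq e^{\frac{3}{2}k\tau}\left[E^0+\frac{3}{2}\tau\sum_{l=1}^k\|R^l\|^2\right].
\end{equation*}
Then I would estimate the right-hand side: by \eqref{2.9}, $\|R^l\|\leq \sqrt{x_r}\,C_{u,\alpha}(\tau^2+h^2)$, so $\frac{3}{2}\tau\sum_{l=1}^k\|R^l\|^2\leq \frac{3}{2}\tau\cdot N_t\cdot x_r C_{u,\alpha}^2(\tau^2+h^2)^2= \frac{3}{2}Tx_rC_{u,\alpha}^2(\tau^2+h^2)^2=\mathcal{O}(\tau^2+h^2)^2$. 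Combining this with $E^0=\mathcal{O}(\tau^2+h^2)^2$ and noting $e^{\frac32 k\tau}\leq e^{\frac32 T}$ is a fixed constant yields $E^k=\mathcal{O}(\tau^2+h^2)^2$, which is the claim.

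The one genuinely nontrivial ingredient is bounding $E^0$, i.e. showing the first step \eqref{2.13} is accurate enough; this requires the local truncation error $\mathcal{O}(\tau^3+\tau^2h^2)$ for $u^1_i$ promised in the text, and in particular the $\Lambda_h^\alpha e^1$ contribution must be $\mathcal{O}(\tau^2+h^2)$ — this follows because $\Lambda_h^\alpha$ has bounded norm (as $-\nabla_h^\alpha$ is a bounded operator for fixed $h$, though one should check the bound is $h$-uniform in the relevant operator norm, which holds because $\|A_\alpha\|\leq C h^\alpha$ in the discrete $\ell^2$ inner product, compensating the $h^{-\alpha}$ in $\nabla_h^\alpha$). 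Everything else is a direct transcription of the stability lemma, so I expect no obstacle beyond this bookkeeping for the starting value.
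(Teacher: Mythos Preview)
Your overall strategy is exactly what the paper does: subtract \eqref{2.10} from \eqref{2.8} to get the error equation, apply Lemma~\ref{lemma3.7} with $R^k$ in place of $f^k$, estimate $\sum_l\|R^l\|^2$ via \eqref{2.9}, and combine with a bound on $E^0$. The only nontrivial step, as you correctly identify, is controlling $E^0$, and in particular $\|\sqrt{a}\Lambda_h^\alpha e^1\|^2$.

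However, your proposed resolution of this step has a genuine gap. The claim $\|A_\alpha\|\le Ch^\alpha$ is false: the matrix $A_\alpha$ in \eqref{2.4} does not depend on $h$ at all (its entries are the fixed weights $\varphi_m^\alpha$), and its spectral norm is $\mathcal{O}(1)$ since $\sum_m|\varphi_m^\alpha|<\infty$. Consequently $\|\nabla_h^\alpha\|$ is of order $h^{-\alpha}$ and $\|\Lambda_h^\alpha\|$ of order $h^{-\alpha/2}$, so a bare operator-norm bound gives only $\|\Lambda_h^\alpha e^1\|^2\le Ch^{-\alpha}\|e^1\|^2\le Ch^{-\alpha}(\tau^3+\tau^2h^2)^2$, which is not $\mathcal{O}(\tau^2+h^2)^2$ without a CFL-type restriction $\tau^2\le Ch^{\alpha}$.

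The paper circumvents this by exploiting the \emph{structure} of $e^1$ rather than its size: from \eqref{2.12}--\eqref{2.13}, $e_i^1$ is the restriction to the grid of a smooth function of $x$ (the spatial consistency error of $\nabla_h^\alpha\varphi$ plus the time-Taylor remainder), multiplied by $\tau^2h^2$ or $\tau^3$. Applying $\nabla_h^\alpha$ to such a grid function approximates the Riesz derivative of the underlying smooth function, which is bounded uniformly in $h$; the paper then writes $\|\sqrt{a}\Lambda_h^\alpha e^1\|^2=-(a\nabla_h^\alpha e^1,e^1)$ and estimates the factor $\nabla_h^\alpha e^1$ term by term in this way to obtain the required $\mathcal{O}(\tau^3+\tau^2h^2)^2$ bound. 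You need this structural argument, not an operator-norm one, to close the proof.
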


\begin{proof}
Subtracting  (\ref{2.10}) from  (\ref{2.8}),  it yields
\begin{equation}\label{3.7}
   \delta^2_te^k_i=\theta a_i\nabla^\alpha_he^{k+1}_i+(1-2\theta)a_i
   \nabla^\alpha_he^k_i+\theta a_i\nabla^\alpha_he^{k-1}_i+R^{k}_i.
\end{equation}
Using Lemma \ref{lemma3.7}, we obtain
\begin{equation}\label{3.8}
\begin{split}
  E^k \leq e^{\frac{3}{2}k\tau}\left[  E^0+\frac{3}{2}\tau\sum_{l=1}^k||R^l||^2\right],
\end{split}
\end{equation}
where
\begin{equation}\label{3.9}
\begin{split}
   E^k
   ={\parallel e^{k+1}_{\bar{t}}\parallel}^2+\frac{1}{4}\parallel\sqrt{a}(\Lambda_h^\alpha  e^{k+1}+\Lambda_h^\alpha  e^{k})
   \parallel^2+\frac{1}{4}(4\theta-1)\parallel\sqrt{a}(\Lambda_h^\alpha  e^{k+1}-\Lambda_h^\alpha  e^{k})\parallel^2.
\end{split}
\end{equation}
Next we estimate the local error truncation of $E^0$. Since $e_i^0=0$ and
\begin{equation*}
 \begin{split}
    e_i^1
  &=\frac{\tau^2}{2} \left[a(x_i)\left(\frac{\partial^\alpha u(x_i,0)}{\partial |x|^\alpha}-\nabla^\alpha_h\varphi(x_i)\right)\right]
  +\frac{1}{2}\int_0^\tau(\tau-t)^2\frac{\partial^3 u(x_i,t)}{\partial t^3}dt\\
  &=\frac{\tau^2}{2}a(x_i)C_{1,\alpha}\frac{\partial^{\alpha+2}u(\xi_i,t)}{\partial |x|^{\alpha+2}}h^2+\frac{1}{2}\int_0^\tau{(\tau-t)^2\frac{\partial^3u(x_i,t)}{\partial t^3}dt}
  \leq C_{2,\alpha}(\tau^3+\tau^2h^2),
\end{split}
\end{equation*}
where $\xi_i \in (0,x_r)$ and
\begin{equation*}
\begin{split}
C_{2,\alpha}= \max\limits_{0\leq x\leq x_r,0\leq t\leq T}\Big\{\frac{1}{2}a_1\Big|{C_{1,\alpha}\frac{\partial^{\alpha+2}u(\xi_i,t)}{\partial x^{\alpha+2}}}\Big|,\frac{1}{6}{\int_0^\tau\Big|\frac{\partial^3u(x_i,t)}{\partial t^3}\Big|dt}\Big\},
\end{split}
\end{equation*}
it implies  that
\begin{equation}\label{3.10}
\begin{split}
   \parallel e^1_{\overline{t}}\parallel^2
   &=\parallel \frac{e^1-e^0}{\tau}\parallel^2\leq(N_x-1)h\frac{1}{\tau^2}C_{2,\alpha}(\tau^3+\tau^2h^2)\cdot C_{2,\alpha}(\tau^3+\tau^2h^2)
   \leq C_{2,\alpha}^2x_r(\tau^2+\tau h^2)^2.
\end{split}
\end{equation}
Here, the coefficients  $ C_{l,\alpha}, 1\leq l\leq2$ are the  constants independent of $h$ and $\tau$.

According to (\ref{2.2}) and the above equations, there exists
\begin{equation*}
\begin{split}
  &\parallel\sqrt{a}\Lambda_h^\alpha e^{1}\parallel^2
  =-(a\nabla_h^\alpha e^{1},e^{1})=-h\sum_{i=1}^{N_x-1}a_{i}\left(\nabla_h^\alpha e^{1}_i\right)\cdot e_i^1\\
  &=-h\sum_{i=1}^{N_x-1} a_{i} \sum_{l=0}^{N_x}\frac{-\kappa_{\alpha}}{h^\alpha}\varphi_{i,l}^{\alpha}
  \left[\frac{a_iC_{1,\alpha}}{2}\frac{\partial^{\alpha+2}u(\xi_i,t)}{\partial |x|^{\alpha+2}}\tau^2h^2
  +\frac{1}{2}\int_0^\tau{(\tau-t)^2\frac{\partial^3u(x_i,t)}{\partial t^3}dt}\right]\cdot e_i^1\\
  &=-h\sum_{i=1}^{N_x-1}a_{i}
  \left[\frac{a_iC_{1,\alpha}}{2}\frac{\partial^{2\alpha+2}u(\xi_i,t)}{\partial |x|^{2\alpha+2}}\tau^2h^2
  +C_{3,\alpha}\frac{\partial^{2\alpha+4}u(\overline{\xi}_i,t)}{\partial |x|^{2\alpha+4}}\tau^2h^4\right]\cdot e_i^1\\
  &\quad-h\sum_{i=1}^{N_x-1}\frac{a_{i}}{2}\int_0^\tau{(\tau-t)^2
  \left[\frac{\partial^{\alpha+3}u(x_i,t)}{\partial t^3\partial |x|^\alpha}
  +C_{4,\alpha}\frac{\partial^{\alpha+5}u(\widetilde{\xi}_i,t)}{\partial t^3 \partial |x|^{\alpha+2}}h^2\right]dt}\cdot e_i^1\\
  &   \leq  C_{5,\alpha}(\tau^3+\tau^2h^2)\cdot C_{2,\alpha}(\tau^3+\tau^2h^2),
\end{split}
\end{equation*}
where $\xi_i,\overline{\xi}_i, \widetilde{\xi_i} \in (0,x_r)$ and $ C_{l,\alpha}, 1\leq l\leq 5$ are the  constants independent of $h$ and $\tau$.
Using  (\ref{3.9}), (\ref{3.10}) and the above equation, we have
\begin{equation}\label{3.11}
  E^0\leq  C_\alpha^2x_r(\tau^2+\tau h^2)^2
\end{equation}
with a constant  $C_\alpha$.
From (\ref{2.9}), (\ref{3.8}) and  (\ref{3.11}), it means that
\begin{equation*}
\begin{split}
  E^k
  &\leq e^{\frac{3}{2}k\tau}\left[  C_\alpha^2x_r(\tau^2+\tau h^2)^2+\frac{3}{2}k\tau C^2_{u,\alpha}(\tau^2+h^2)^2\right]
  \leq  \widetilde{C}_\alpha e^{\frac{3}{2}T}(\tau^2+h^2)^2
\end{split}
\end{equation*}
with $ \widetilde{C}_\alpha=2\max\{ C_\alpha^2x_r, \frac{3}{2}C^2_{u,\alpha}T\}$. The proof is completed.
\end{proof}
\begin{theorem}\label{theorem3.2}
The difference scheme (\ref{2.14}) with $1<\alpha\leq2$ and $\frac{1}{4}\leq\theta\leq1$ is unconditionally stable.
\end{theorem}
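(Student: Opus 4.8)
The plan is to obtain the stability of \eqref{2.14} as an essentially immediate corollary of the energy estimate already proved in Lemma \ref{lemma3.7}, since \eqref{2.14} is merely the matrix form of the scheme \eqref{2.10}. First I would use linearity: if $\{u_i^k\}$ and $\{\tilde u_i^k\}$ solve \eqref{2.10} (equivalently \eqref{2.14}) with data $(\varphi,\psi,f)$ and $(\tilde\varphi,\tilde\psi,\tilde f)$, then their difference $w_i^k:=u_i^k-\tilde u_i^k$ satisfies the same scheme with data $(\varphi-\tilde\varphi,\psi-\tilde\psi,f-\tilde f)$ and the homogeneous Dirichlet boundary condition. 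Hence it suffices to bound one solution by its own data, and ``unconditional stability'' will mean precisely that this bound involves a constant uniform in $\tau$ and $h$ (no CFL-type coupling between $\tau$ and $h$).

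Next I would apply Lemma \ref{lemma3.7} to $\{u_i^k\}$ directly. For $\tau\le 2/3$ and $k\tau\le T$ it gives
\begin{equation*}
E^k\le e^{\frac32 k\tau}\Big[E^0+\frac32\tau\sum_{l=1}^k\|f^l\|^2\Big]\le e^{\frac32 T}\Big[E^0+\frac32 T\max_{1\le l\le k}\|f^l\|^2\Big],
\end{equation*}
with $E^k=\|u^{k+1}_{\bar t}\|^2+\frac14\|\sqrt a(\Lambda_h^\alpha u^{k+1}+\Lambda_h^\alpha u^{k})\|^2+\frac14(4\theta-1)\|\sqrt a(\Lambda_h^\alpha u^{k+1}-\Lambda_h^\alpha u^{k})\|^2$. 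I would then note that $\frac14\le\theta\le1$ forces $4\theta-1\ge0$, so $E^k$ is a sum of nonnegative terms; in particular $E^k\ge\|u^{k+1}_{\bar t}\|^2$, and for $\theta>\frac14$ the parallelogram law yields $E^k\ge\|u^{k+1}_{\bar t}\|^2+2\min\{\tfrac14,\tfrac{4\theta-1}{4}\}\big(\|\sqrt a\Lambda_h^\alpha u^{k+1}\|^2+\|\sqrt a\Lambda_h^\alpha u^{k}\|^2\big)$, so that $E^k$ is equivalent, with $\tau$- and $h$-independent constants, to a genuine discrete energy of the pair $(u^{k+1},u^{k})$; here I use that $\Lambda_h^\alpha$ is symmetric positive definite, by Lemma \ref{lemma3.1}. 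At the endpoint $\theta=\frac14$ the elastic part of $E^k$ controls only the combination $\Lambda_h^\alpha(u^{k+1}+u^{k})$, so the conclusion there is most cleanly phrased in the energy $E^k$ itself, which still suffices.

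It then remains to bound $E^0$ by the starting data. Since $u^0=\varphi$ and $u^1$ is furnished by \eqref{2.13}, one computes $u^1_{\bar t}=\tau^{-1}(u^1-u^0)=\psi+\frac{\tau}{2}\big(D\nabla^\alpha_h\varphi+f^0\big)$, whence $\|u^1_{\bar t}\|\le\|\psi\|+\frac{\tau}{2}\big(a_1\|\nabla^\alpha_h\varphi\|+\|f^0\|\big)$, and similarly $\|\sqrt a(\Lambda_h^\alpha u^1\pm\Lambda_h^\alpha u^0)\|$ is bounded in terms of $\|\Lambda_h^\alpha\varphi\|$, $\|\Lambda_h^\alpha\psi\|$, $\|\Lambda_h^\alpha f^0\|$; thus $E^0\le C_0$, with $C_0$ depending only on the (fixed) starting data through these norms and not on $\tau$ or $h$. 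Combining with the displayed estimate gives $E^k\le e^{\frac32 T}\big[C_0+\frac32 T\max_{1\le l\le k}\|f^l\|^2\big]$, which is exactly unconditional stability: the amplification factor $e^{\frac32 T}$ and the only restriction $\tau\le2/3$ carry no dependence on $h$, so there is no stability condition linking the time step and the mesh size.

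The quantitative work has already been done inside Lemma \ref{lemma3.7}, so the ``main obstacle'' here is only bookkeeping: one must pin down the precise norms in which the starting data and $\{f^k\}$ are measured, check that the passage from $E^0$ to those norms (and from $E^k$ to an honest energy norm when $\theta>\frac14$) costs only $\tau$- and $h$-independent constants, and acknowledge that at the single endpoint $\theta=\frac14$ the functional $E^k$ is weaker, so stability there is stated in the natural energy $E^k$ rather than in a norm separately controlling $\Lambda_h^\alpha u^{k+1}$.
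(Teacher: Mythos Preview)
Your proposal is correct and follows essentially the same approach as the paper, namely deducing stability directly from the energy estimate of Lemma~\ref{lemma3.7}. In fact the paper's own proof is the single sentence ``From Lemma~\ref{lemma3.7}, the proof is completed,'' so you have supplied considerably more detail (the linearity reduction, the nonnegativity of $E^k$ for $\theta\ge\tfrac14$, and the bound on $E^0$) than the authors themselves.
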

\begin{proof}
  From Lemma \ref{lemma3.7}, the proof is completed.
\end{proof}
\subsection{Convergence and stability for two-dimensional space-Riesz fractional wave  equation}
Denote  $u^k=\{u_{i,j}^k| 0 \leq i \leq N_x,0 \leq j \leq N_y, 0\leq k\leq N_t \}$
and $v^k=\{v_{i,j}^k| 0 \leq i \leq N_x,0 \leq j \leq N_y, 0\leq k\leq N_t \}$, which are grid functions. And
\begin{equation}\label{3.12}
\begin{split}
&u_{\overline{t},i,j}^k=(u_{i,j}^{k} - u_{i,j}^{k-1})/\tau,
~~~~(u^k,v^k)=h_xh_y\sum_{i=1}^{N_x-1}\sum_{j=1}^{N_y-1}u_{i,j}^kv_{i,j}^k, ~~~~~||u^k||=(u^k,u^k)^{1/2}.
\end{split}
\end{equation}
\begin{lemma}\label{lemma3.8}
Let $\frac{1}{4}\leq \theta \leq 1$, $1< \alpha,\beta \leq 2$ and $\{u_{ij}^k\}$ be the solution of the difference scheme
\begin{equation}\label{3.13}
\begin{split}
  &\delta^2_tu^k_{i,j}+\theta^2 \tau^4 a_{i,j}b_{i,j}\nabla^\alpha_{h_x}\nabla^\beta_{h_y} \big(u_{i,j}^{k+1}-2u_{i,j}^{k}+u_{i,j}^{k-1}\big)\\
  &=\theta a_{i,j}\nabla^\alpha_{h_x}u^{k+1}_{i,j}+(1-2\theta)
  a_{i,j}\nabla^\alpha_{h_x}u^k_{i,j}+\theta a_{i,j}\nabla^\alpha_{h_x}u^{k-1}_{i,j}\\
  &\quad+\theta b_{i,j}\nabla_{h_y}^\beta u^{k+1}_{i,j}+(1-2\theta)
  b_{i,j}\nabla_{h_y}^\beta u^k_{i,j}+\theta b_{i,j}\nabla_{h_y}^\beta u^{k-1}_{i,j}+f_{i,j}^k
  \end{split}
\end{equation}
with the initial conditions and the  Dirichlet boundary conditions
\begin{equation*}
\begin{split}
&u_{i,j}^0=\varphi_{i,j}, ~~0\leq i\leq N_x,0\leq j\leq N_y,\\
&u_{i,j}^1=\psi_{i,j},~~0\leq i\leq N_x,0\leq j\leq N_y,\\
& u_{i,j}^k=0, ~~~ (x_i,y_j) \in \partial \Omega,~~~0\leq k\leq N_t-1.
\end{split}
\end{equation*}
 Then
\begin{equation*}
\begin{split}
E^k \leq e^{\frac{3}{2}k\tau}\left[  E^0+\frac{3}{2}\tau\sum_{l=1}^k||f^l||^2\right],
\end{split}
\end{equation*}
where the energy  norm is defined by 
\begin{equation*}
\begin{split}
   E^k
   &={\parallel u^{k+1}_{\bar{t}}\parallel}^2+\frac{1}{4}\parallel\sqrt{a}(\Lambda_{h_x}^\alpha  u^{k+1}+\Lambda_{h_x}^\alpha  u^{k})
   \parallel^2+\frac{1}{4}(4\theta-1)\parallel\sqrt{a}(\Lambda_{h_x}^\alpha  u^{k+1}-\Lambda_{h_x}^\alpha  u^{k})\parallel^2\\
   &\quad+\frac{1}{4}\parallel\sqrt{b}(\Lambda_{h_y}^\beta  u^{k+1}+\Lambda_{h_y}^\beta  u^{k})
   \parallel^2+\frac{1}{4}(4\theta-1)\parallel\sqrt{b}(\Lambda_{h_y}^\beta  u^{k+1}-\Lambda_{h_y}^\beta  u^{k})\parallel^2\\
   &\quad+\theta^2\tau^6\parallel\sqrt{ab}\Lambda_{h_x}^\alpha\Lambda_{h_y}^\beta u^{k+1}_{\overline{t}}\parallel^2.
\end{split}
\end{equation*}
\end{lemma}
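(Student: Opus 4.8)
The plan is to imitate the energy argument of Lemma \ref{lemma3.7}, running the two spatial directions in parallel and treating the ADI cross term as one additional telescoping contribution. Concretely, I would multiply the scheme \eqref{3.13} by $h_xh_y(u_{i,j}^{k+1}-u_{i,j}^{k-1})$ and sum over $1\le i\le N_x-1$, $1\le j\le N_y-1$ with the inner product of \eqref{3.12}. As in \eqref{3.2}, $\delta_t^2u_{i,j}^k\cdot(u_{i,j}^{k+1}-u_{i,j}^{k-1})$ telescopes to $\|u_{\bar t}^{k+1}\|^2-\|u_{\bar t}^{k}\|^2$. The six first-order-in-$\theta$ spatial terms split into an $x$-part and a $y$-part; each is handled exactly as $I_1,I_2$ were in the proof of Lemma \ref{lemma3.7}, now invoking Lemma \ref{lemma3.6}, i.e.\ the factorizations $-\nabla^\alpha_{h_x}=\Lambda_{h_x}^\alpha\cdot\Lambda_{h_x}^\alpha$, $-\nabla^\beta_{h_y}=\Lambda_{h_y}^\beta\cdot\Lambda_{h_y}^\beta$ together with $-(\nabla^\alpha_{h_x}\mathbf U,\mathbf V)=(\Lambda_{h_x}^\alpha\mathbf U,\Lambda_{h_x}^\alpha\mathbf V)$ and its $y$-analogue, and carrying the variable coefficients through the diagonal factors $\sqrt a,\sqrt b$ as in Lemma \ref{lemma3.7}. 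After adding $\theta\|\sqrt a\,\Lambda_{h_x}^\alpha u^k\|^2+\theta\|\sqrt b\,\Lambda_{h_y}^\beta u^k\|^2$ to both sides, exactly as in the one-dimensional proof, these assemble into the first two lines of the stated $E^k$.

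Next I would dispose of the new cross-perturbation term $J:=\theta^2\tau^4(ab\,\nabla^\alpha_{h_x}\nabla^\beta_{h_y}(u^{k+1}-2u^k+u^{k-1}),u^{k+1}-u^{k-1})$. Using $\nabla^\alpha_{h_x}\nabla^\beta_{h_y}=\Lambda_{h_x}^\alpha\Lambda_{h_x}^\alpha\Lambda_{h_y}^\beta\Lambda_{h_y}^\beta$ and the commutativity $\Lambda_{h_x}^\alpha\Lambda_{h_y}^\beta=\Lambda_{h_y}^\beta\Lambda_{h_x}^\alpha$ from Lemma \ref{lemma3.5}, one moves one copy of $\Lambda_{h_x}^\alpha$ and one copy of $\Lambda_{h_y}^\beta$ into the other slot of the inner product, so that $J=\theta^2\tau^4(\sqrt{ab}\,\Lambda_{h_x}^\alpha\Lambda_{h_y}^\beta(u^{k+1}-2u^k+u^{k-1}),\sqrt{ab}\,\Lambda_{h_x}^\alpha\Lambda_{h_y}^\beta(u^{k+1}-u^{k-1}))$. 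Writing $u^{k+1}-2u^k+u^{k-1}=\tau(u_{\bar t}^{k+1}-u_{\bar t}^{k})$, $u^{k+1}-u^{k-1}=\tau(u_{\bar t}^{k+1}+u_{\bar t}^{k})$ and using $(p+q,p-q)=\|p\|^2-\|q\|^2$, one gets $J=\theta^2\tau^6(\|\sqrt{ab}\,\Lambda_{h_x}^\alpha\Lambda_{h_y}^\beta u_{\bar t}^{k+1}\|^2-\|\sqrt{ab}\,\Lambda_{h_x}^\alpha\Lambda_{h_y}^\beta u_{\bar t}^{k}\|^2)$, which is precisely the last line of $E^k$ in telescoped form. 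Collecting all terms then yields the identity $E^k-E^{k-1}=(f^k,u^{k+1}-u^{k-1})$, where $E^k\ge0$ is guaranteed by $\theta\ge\tfrac14$ (so $4\theta-1\ge0$) and by the final term being a genuine square.

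Finally I would bound the right-hand side exactly as in \eqref{3.6}: Cauchy--Schwarz together with the arithmetic--geometric mean inequality gives $(f^k,u^{k+1}-u^{k-1})\le\tfrac{\tau}{2}(\|u_{\bar t}^{k+1}\|^2+\|u_{\bar t}^{k}\|^2)+\tau\|f^k\|^2\le\tfrac{\tau}{2}(E^k+E^{k-1})+\tau\|f^k\|^2$, hence $(1-\tfrac{\tau}{2})E^k\le(1+\tfrac{\tau}{2})E^{k-1}+\tau\|f^k\|^2$; for $\tau\le 2/3$ this becomes $E^k\le(1+\tfrac{3\tau}{2})E^{k-1}+\tfrac32\tau\|f^k\|^2$, and the discrete Gronwall inequality (Lemma \ref{lemma3.2}) delivers the claimed estimate. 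The one genuinely new obstacle relative to the one-dimensional Lemma \ref{lemma3.7} is the treatment of the cross term $J$: it is exactly there that the commutativity of $\Lambda_{h_x}^\alpha$ and $\Lambda_{h_y}^\beta$ (Lemmas \ref{lemma3.5} and \ref{lemma3.6}) is indispensable, since without it the fourth-order mixed operator cannot be split symmetrically across the inner product and $J$ would fail to telescope into a clean energy difference.
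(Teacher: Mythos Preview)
Your proposal is correct and follows essentially the same route as the paper: multiply \eqref{3.13} by $h_xh_y(u_{i,j}^{k+1}-u_{i,j}^{k-1})$, sum, telescope the $\delta_t^2$ part, split the spatial terms into $I_1,\dots,I_4$ handled via Lemma~\ref{lemma3.6} exactly as in Lemma~\ref{lemma3.7}, treat the ADI cross term through the commutativity of Lemma~\ref{lemma3.5} to obtain the telescoped $\theta^2\tau^6\|\sqrt{ab}\,\Lambda_{h_x}^\alpha\Lambda_{h_y}^\beta u_{\bar t}^{\cdot}\|^2$ piece, add $\theta\|\sqrt a\,\Lambda_{h_x}^\alpha u^k\|^2+\theta\|\sqrt b\,\Lambda_{h_y}^\beta u^k\|^2$ to both sides, arrive at $E^k-E^{k-1}=(f^k,u^{k+1}-u^{k-1})$, and finish with the estimate \eqref{3.6} and Lemma~\ref{lemma3.2}. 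Your explanation of why the cross term $J$ telescopes (via $u^{k+1}-2u^k+u^{k-1}=\tau(u_{\bar t}^{k+1}-u_{\bar t}^{k})$, $u^{k+1}-u^{k-1}=\tau(u_{\bar t}^{k+1}+u_{\bar t}^{k})$ and $(p-q,p+q)=\|p\|^2-\|q\|^2$) is in fact more explicit than the paper's, which records the telescoped form directly in \eqref{3.14}.
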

\begin{proof}
Multiplying  (\ref{3.13}) by $h_xh_y( u ^{k+1}_{i,j}- u ^{k-1}_{i,j})$ and using  Lemmas \ref{lemma3.5}, \ref{lemma3.6},  there exists
  \begin{equation*}
  \begin{split}
&\left(\delta^2_t u^{k}_{i,j}+\theta^2\tau^4a_{i,j}b_{i,j}\nabla_{h_x}^\alpha\nabla_{h_y}^\beta (u_{i,j}^{k+1}-2u_{i,j}^k+u_{i,j}^{k-1})\right)
    \cdot \left[h_xh_y( u^{k+1}_{i,j}- u^{k}_{i,j})+h_xh_y( u^{k}_{i,j}- u^{k-1}_{i,j})\right]\\
 &\!=\!h_xh_y( u_{\bar{t},i,j}^{k+1})^2
   \!-\!h_xh_y( u_{\bar{t},i,j}^k)^2
   \!+\!h_xh_y\theta^2\tau^6(\sqrt{a_{i,j}b_{i,j}}\Lambda_{h_x}^\alpha\Lambda_{h_y}^\beta u^{k+1}_{\overline{t},i,j})^2
  \!-\!h_xh_y\theta^2\tau^6(\sqrt{a_{i,j}b_{i,j}}\Lambda_{h_x}^\alpha\Lambda_{h_y}^\beta u^{k}_{\overline{t},i,j})^2,\\
  \end{split}
  \end{equation*}
  and
\begin{equation*}
\begin{split}
&\Big[\theta a_{i,j}\nabla^\alpha_{h_x} u^{k+1}_{i,j}+(1-2\theta)a_{i,j}
  \nabla^\alpha_{h_x} u^{k}_{i,j}+\theta a_{i,j}\nabla^\alpha_{h_x} u^{k-1}_{i,j}\\
  &\quad+\theta  u^{k+1}_{i,j}+(1-2\theta)b_{i,j}
  \nabla_{h_y}^\beta  u^{k}_{i,j}+\theta b_{i,j}\nabla_{h_y}^\beta  u^{k-1}_{i,j}+f_{i,j}^k\Big]\cdot h_xh_y( u_{i,j}^{k+1}- u_{i,j}^{k-1}).
\end{split}
\end{equation*}
Then summing up for $i$ from 1 to $N_x-1$ and for $j$ from 1 to $N_y-1$, we have
\begin{equation}\label{3.14}
\begin{split}
&\sum_{i=1}^{N_x-1}\sum_{j=1}^{N_y-1}\left[h_xh_y( u_{\bar{t},i,j}^{k+1})^2-h_xh_y( u_{\bar{t},i,j}^k)^2\right]
 ={\parallel  u^{k+1}_{\bar{t}}\parallel}^2- {\parallel  u^{k}_{\bar{t}}\parallel}^2,\\
    &\sum_{i=1}^{N_x-1}\sum_{j=1}^{N_y-1}\Big(h_xh_y\theta^2\tau^6(\sqrt{a_{i,j}b_{i,j}}\Lambda_{h_x}^\alpha\Lambda_{h_y}^\beta u^{k+1}_{\overline{t},i,j})^2 -h_xh_y\theta^2\tau^6(\sqrt{a_{i,j}b_{i,j}}\Lambda_{h_x}^\alpha\Lambda_{h_y}^\beta u^{k}_{\overline{t},i,j})^2\Big)\\
    &=\theta^2\tau^6\parallel\sqrt{ab}\Lambda_{h_x}^\alpha\Lambda_{h_y}^\beta u^{k+1}_{\overline{t}}\parallel^2-\theta^2\tau^6\parallel\sqrt{ab}\Lambda_{h_x}^\alpha\Lambda_{h_y}^\beta u^{k}_{\overline{t}})\parallel^2,
\end{split}
\end{equation}
 and
\begin{equation}\label{3.15}
\begin{split}
  &\sum_{i=1}^{N_x-1}\sum_{j=1}^{N_y-1}\Big[\theta a_{i,j}\nabla^\alpha_{h_x} u^{k+1}_{i,j}+(1-2\theta)a_{i,j}
  \nabla^\alpha_{h_x} u^{k}_{i,j}+\theta a_{i,j}\nabla^\alpha_{h_x} u^{k-1}_{i,j} \\
  &\quad+\theta b_{i,j}\Lambda_{h_y}^\beta  u ^{k+1}_{i,j}+(1-2\theta)b_{i,j}\Lambda_{h_y}^\beta  u ^{k}_{i,j}+\theta b_{i,j}\Lambda_{h_y}^\beta u ^{k-1}_{i,j}+f_{i,j}^k\Big]\cdot h_xh_y( u _{i,j}^{k+1}- u _{i,j}^{k-1})\\
   &=I_1+I_2+I_3+I_4+(f^k, u^{k+1}- u^{k-1}),
\end{split}
\end{equation}
where
\begin{equation*}
\begin{split}
I_1
& = \theta\left(a\nabla^\alpha_{h_x}  u^{k+1}+a\nabla^\alpha_{h_x}  u^{k-1}, u^{k+1}- u^{k-1}\right),
~I_2
= (1-2\theta)\left(a\nabla^\alpha_{h_x}  u^{k}, u^{k+1}- u^{k-1}\right),\\
I_3
&= \theta\left(b\nabla_{h_y}^\beta   u^{k+1}+b\nabla_{h_y}^\beta   u^{k-1}, u^{k+1}- u^{k-1}\right),
~I_4
= (1-2\theta)\left(b\nabla_{h_y}^\beta   u^{k}, u^{k+1}- u^{k-1}\right).
\end{split}
\end{equation*}
  According to Lemma \ref{lemma3.6}, we have
  \begin{equation*}
  \begin{split}
  I_1
  =&-\theta\left(\parallel\sqrt{a}\Lambda^\alpha_{h_x} u^{k+1}\parallel^2-\parallel\sqrt{a}\Lambda^\alpha_{h_x} u^{k-1}\parallel^2\right)\\
    I_2
  =&-\frac{(1-2\theta)}{4}\Big(\parallel\sqrt{a}(\Lambda^\alpha _{h_x} u ^{k+1}+\Lambda^\alpha _{h_x} u ^{k})\parallel^2-\parallel\sqrt{a}(\Lambda^\alpha _{h_x} u ^{k+1}-\Lambda^\alpha _{h_x} u ^{k})\parallel^2\\
  &~-\parallel\sqrt{a}(\Lambda^\alpha _{h_x} u ^k+\Lambda^\alpha _{h_x} u ^{k-1})\parallel^2+\parallel\sqrt{a}(\Lambda^\alpha _{h_x} u ^k-\Lambda^\alpha _{h_x} u ^{k-1})\parallel^2\Big),
\end{split}
\end{equation*}
and
   \begin{equation*}
   \begin{split}
  I_3
  =&-\theta\left(\parallel\sqrt{b}\Lambda_{h_y}^\beta  u^{k+1}\parallel^2-\parallel\sqrt{b}\Lambda_{h_y}^\beta  u^{k-1}\parallel^2\right),\\
  I_4
  =&-\frac{(1-2\theta)}{4}\Big(\parallel\sqrt{b}(\Lambda^\beta _{h_y} u^{k+1}+\Lambda^\beta _{h_y} u^{k})\parallel^2-\parallel\sqrt{b}(\Lambda^\beta _{h_y} u^{k+1}-\Lambda^\beta _{h_y} u^{k})\parallel^2\\
  &~-\parallel\sqrt{b}(\Lambda^\beta _{h_y} u ^{k}+\Lambda^\beta _{h_y} u ^{k-1})\parallel^2+\parallel\sqrt{b}(\Lambda^\beta _{h_y} u ^k-\Lambda^\beta _{h_y} u ^{k-1})\parallel^2\Big).
   \end{split}
   \end{equation*}
From  (\ref{3.14}) and (\ref{3.15}), we obtain
\begin{equation*}
\begin{split}
     &{\parallel  u^{k+1}_{\bar{t}}\parallel}^2- {\parallel  u^{k}_{\bar{t}}\parallel}^2
     +\theta^2\tau^6\parallel\sqrt{ab}\Lambda_{h_x}^\alpha\Lambda_{h_y}^\beta u^{k+1}_{\overline{t}}\parallel^2-\theta^2\tau^6\parallel\sqrt{ab}\Lambda_{h_x}^\alpha\Lambda_{h_y}^\beta u^{k}_{\overline{t}})\parallel^2
    -I_1-I_2-I_3-I_4\\
    &=(f^k, u^{k+1}- u^{k-1}),
\end{split}
\end{equation*}
i.e.,
  \begin{equation*}
  \begin{split}
  &{\parallel  u^{k+1}_{\bar{t}}\parallel}^2+\theta^2\tau^6\parallel\sqrt{ab}\Lambda_{h_x}^\alpha\Lambda_{h_y}^\beta u^{k+1}_{\overline{t}}\parallel^2+\theta\parallel\sqrt{a}\Lambda_{h_x}^\alpha  u^{k+1}\parallel^2\\
  &\quad+\frac{1-2\theta}{4}(\parallel\sqrt{a}(\Lambda_{h_x}^\alpha  u^{k+1}+\Lambda_{h_x}^\alpha  u^{k})\parallel^2
  -\parallel\sqrt{a}(\Lambda_{h_x}^\alpha  u^{k+1}-\Lambda_{h_x}^\alpha  u^{k})\parallel^2)\\
  &\quad+\theta\parallel\sqrt{b}\Lambda_{h_y}^\beta  u^{k+1}\parallel^2
  +\frac{1-2\theta}{4}(\parallel\sqrt{b}(\Lambda_{h_y}^\beta  u^{k+1}+\Lambda_{h_y}^\beta  u^{k})\parallel^2
   -\parallel\sqrt{b}(\Lambda_{h_y}^\beta  u^{k+1}-\Lambda_{h_y}^\beta  u^{k})\parallel^2)\\
  &={\parallel  u^{k}_{\bar{t}}\parallel}^2+\theta^2\tau^6\parallel\sqrt{ab}\Lambda_{h_x}^\alpha\Lambda_{h_y}^\beta u^{k}_{\overline{t}}\parallel^2+\theta\parallel\sqrt{a}\Lambda_{h_x}^\alpha  u^{k-1}\parallel^2\\
  &\quad+\frac{1-2\theta}{4}(\parallel\sqrt{a}(\Lambda_{h_x}^\alpha  u^{k}+\Lambda_{h_x}^\alpha  u^{k-1})\parallel^2
  -\parallel\sqrt{a}(\Lambda_{h_x}^\alpha  u^{k}-\Lambda_{h_x}^\alpha  u^{k-1})\parallel^2)\\
  &\quad+\theta\parallel\sqrt{b}\Lambda_{h_y}^\beta  u^{k}\parallel^2
  +\frac{1-2\theta}{4}(\parallel\sqrt{b}(\Lambda_{h_y}^\beta  u^{k}+\Lambda_{h_y}^\beta  u^{k-1})\parallel^2
  -\parallel\sqrt{b}(\Lambda_{h_y}^\beta  u^{k}-\Lambda_{h_y}^\beta  u^{k-1})\parallel^2)\\
  &\quad+(f^k, u^{k+1}- u^{k-1}).
   \end{split}
   \end{equation*}
Adding $\theta\parallel\sqrt{a}\Lambda_{h_x}^\alpha  u ^{k}\parallel^2+\theta\parallel\sqrt{b}\Lambda_{h_y}^\beta  u ^{k}\parallel^2$ on both sides of the above equation, we have
\begin{equation*}
\begin{split}
  &{\parallel  u ^{k+1}_{\bar{t}}\parallel}^2+\theta(\parallel\sqrt{a}\Lambda_{h_x}^\alpha  u ^{k+1}\parallel^2+\parallel\sqrt{a}\Lambda_{h_x}^\alpha u ^{k}\parallel^2)
  +\frac{1-2\theta}{4}(\parallel\sqrt{a}(\Lambda_{h_x}^\alpha  u^{k+1}+\Lambda_{h_x}^\alpha  u^{k})\parallel^2\\
  &\quad-\parallel\sqrt{a}(\Lambda_{h_x}^\alpha  u^{k+1}-\Lambda_{h_x}^\alpha  u^{k})\parallel^2)
  +\theta(\parallel\sqrt{b}\Lambda_{h_y}^\beta  u^{k+1}\parallel^2+\parallel\sqrt{b}\Lambda_{h_y}^\beta  u^{k}\parallel^2
  \\&\quad+\frac{1-2\theta}{4}(\parallel\sqrt{b}(\Lambda_{h_y}^\beta  u^{k+1}+\Lambda_{h_y}^\beta  u^{k})\parallel^2
  -\parallel\sqrt{b}(\Lambda_{h_y}^\beta  u^{k+1}-\Lambda_{h_y}^\beta  u^{k})\parallel^2)
  +\theta^2\tau^6\parallel\sqrt{ab}\Lambda_{h_x}^\alpha\Lambda_{h_y}^\beta u^{k+1}_{\overline{t}}\parallel^2\\
  &={\parallel  u ^{k}_{\bar{t}}\parallel}^2+\theta(\parallel\sqrt{a}\Lambda_{h_x}^\alpha  u ^{k}\parallel^2+\parallel\sqrt{a}\Lambda_{h_x}^\alpha u ^{k-1}\parallel^2)
  +\frac{1-2\theta}{4}(\parallel\sqrt{a}(\Lambda_{h_x}^\alpha  u^{k}+\Lambda_{h_x}^\alpha  u^{k-1})\parallel^2
  \\&\quad-\parallel\sqrt{a}(\Lambda_{h_x}^\alpha  u^{k}-\Lambda_{h_x}^\alpha  u^{k-1})\parallel^2)
  +\theta(\parallel\sqrt{b}\Lambda_{h_y}^\beta  u^{k}\parallel^2+\parallel\sqrt{b}\Lambda_{h_y}^\beta  u^{k-1}\parallel^2)
  \\&\quad+\frac{1-2\theta}{4}(\parallel\sqrt{b}(\Lambda_{h_y}^\beta  u^{k}+\Lambda_{h_y}^\beta  u^{k-1})\parallel^2
  -\parallel\sqrt{b}(\Lambda_{h_y}^\beta  u^{k}-\Lambda_{h_y}^\beta  u^{k-1})\parallel^2)\\
  &\quad+\theta^2\tau^6\parallel\sqrt{ab}\Lambda_{h_x}^\alpha\Lambda_{h_y}^\beta u^{k}_{\overline{t}}\parallel^2
 +(f^k, u^{k+1}- u^{k-1}).
\end{split}
\end{equation*}
Denoting
\begin{equation}\label{3.16}
\begin{split}
   E^k
&= {\parallel  u ^{k+1}_{\bar{t}}\parallel}^2+\theta(\parallel\sqrt{a}\Lambda_{h_x}^\alpha  u ^{k+1}\parallel^2+\parallel\sqrt{a}\Lambda_{h_x}^\alpha u ^{k}\parallel^2)
  +\frac{1-2\theta}{4}(\parallel\sqrt{a}(\Lambda_{h_x}^\alpha  u^{k+1}+\Lambda_{h_x}^\alpha  u^{k})\parallel^2\\
  &\quad-\parallel\sqrt{a}(\Lambda_{h_x}^\alpha  u^{k+1}-\Lambda_{h_x}^\alpha  u^{k})\parallel^2)
  +\theta(\parallel\sqrt{b}\Lambda_{h_y}^\beta  u^{k+1}\parallel^2+\parallel\sqrt{b}\Lambda_{h_y}^\beta  u^{k}\parallel^2\\
  &\quad+\frac{1-2\theta}{4}(\parallel\sqrt{b}(\Lambda_{h_y}^\beta  u^{k+1}+\Lambda_{h_y}^\beta  u^{k})\parallel^2
  -\parallel\sqrt{b}(\Lambda_{h_y}^\beta  u^{k+1}-\Lambda_{h_y}^\beta  u^{k})\parallel^2)\\
  &\quad+\theta^2\tau^6\parallel\sqrt{ab}\Lambda_{h_x}^\alpha\Lambda_{h_y}^\beta u^{k+1}_{\overline{t}}\parallel^2,
\end{split}
\end{equation}
 we have
   \begin{equation}\label{3.17}
     E^k = E^{k-1}+(f^k, u^{k+1}- u^{k-1}).
   \end{equation}
We rewrite (\ref{3.16}) as the following form 
\begin{equation}\label{3.18}
\begin{split}
   E^k
   &={\parallel u^{k+1}_{\bar{t}}\parallel}^2+\frac{1}{4}\parallel\sqrt{a}(\Lambda_{h_x}^\alpha  u^{k+1}+\Lambda_{h_x}^\alpha  u^{k})
   \parallel^2+\frac{1}{4}(4\theta-1)\parallel\sqrt{a}(\Lambda_{h_x}^\alpha  u^{k+1}-\Lambda_{h_x}^\alpha  u^{k})\parallel^2\\
   &\quad+\frac{1}{4}\parallel\sqrt{b}(\Lambda_{h_y}^\beta  u^{k+1}+\Lambda_{h_y}^\beta  u^{k})
   \parallel^2+\frac{1}{4}(4\theta-1)\parallel\sqrt{b}(\Lambda_{h_y}^\beta  u^{k+1}-\Lambda_{h_y}^\beta  u^{k})\parallel^2\\
   &\quad+\theta^2\tau^6\parallel\sqrt{ab}\Lambda_{h_x}^\alpha\Lambda_{h_y}^\beta u^{k+1}_{\overline{t}}\parallel^2,
\end{split}
\end{equation}
   where we use $$\parallel\sqrt{a}\Lambda_{h_x}^\alpha  u^{k+1}\parallel^2
   +\parallel\sqrt{a}\Lambda_{h_x}^\alpha  u^{k}\parallel^2=\frac{1}{2}(\parallel\sqrt{a}(\Lambda_{h_x}^\alpha  u^{k+1}+\Lambda_{h_x}^\alpha  u^{k})
   \parallel^2+\parallel\sqrt{a}(\Lambda_{h_x}^\alpha  u ^{k+1}-\Lambda_{h_x}^\alpha  u ^{k})\parallel^2),$$ and $$\parallel\sqrt{b}\Lambda_{h_y}^\beta u ^{k+1}\parallel^2
   +\parallel\sqrt{b}\Lambda_{h_y}^\beta  u^{k}\parallel^2=\frac{1}{2}(\parallel\sqrt{b}(\Lambda_{h_y}^\beta  u^{k+1}+\Lambda_{h_y}^\beta  u^{k})
   \parallel^2+\parallel\sqrt{b}(\Lambda_{h_y}^\beta  u^{k+1}-\Lambda_{h_y}^\beta  u^{k})\parallel^2).$$
According to
\begin{equation*}
\begin{split}
  (f^{k}, u^{k+1}- u^{k-1})
  &=2h_xh_y\tau\sum_{i=1}^{N_x-1}\sum_{j=1}^{N_y-1}f_{i,j}^k\left(\frac{ u_{i,j}^{k+1}- u_{i,j}^{k-1}}{2\tau}\right)\\
  &\leq h_xh_y\tau\sum_{i=1}^{N_x-1}\sum_{j=1}^{N_y-1}\left[\left(f_{i,j}^k\right)^2+\left(\frac{ u_{i,j}^{k+1}- u_{i,j}^{k}+ u_{i,j}^{k}- u_{i,j}^{k-1}}{2\tau}\right)^2\right]\\
  &\leq \frac{\tau}{2}\left({\parallel  u^{k+1}_{\bar{t}}\parallel}^2 +{\parallel  u^{k}_{\bar{t}}\parallel}^2\right)+\tau||f^k||^2,
\end{split}
\end{equation*}
and  (\ref{3.18}),  (\ref{3.17}),  there exists
\begin{equation*}
\begin{split}
E^{k}-E^{k-1}=(f^{k},u^{k+1}-u^{k-1}) \leq \frac{\tau}{2} (E^{k}+E^{k-1}) +\tau||f^k||^2,
\end{split}
\end{equation*}
i.e.,
\begin{equation*}
\begin{split}
\left(1-\frac{\tau}{2}\right)E^{k}\leq \left(1+\frac{\tau}{2}\right)E^{k-1}+\tau||f^k||^2.
\end{split}
\end{equation*}
For $\tau\leq 2/3$, which leads to 
\begin{equation*}
\begin{split}
E^{k}\leq \left(1+\frac{3\tau}{2}\right)E^{k-1}+\frac{3}{2}\tau||f^k||^2.
\end{split}
\end{equation*}
From Lemma \ref{lemma3.2},  there exists 
\begin{equation*}
\begin{split}
E^k \leq e^{\frac{3}{2}k\tau}\left[  E^0+\frac{3}{2}\tau\sum_{l=1}^k||f^l||^2\right].
\end{split}
\end{equation*}
The proof is completed.
\end{proof}
\begin{theorem}\label{theorem3.3}
 Let $u(x_i,y_j,t_k)$ be the exact solution of (\ref{1.1}) with $1<\alpha,\beta\leq2$, $u^k_{i,j}$ be the solution of (\ref{2.22}) and $e_{ij}^k=u(x_i,y_j,t_k)-u_{ij}^k$. Then
\begin{equation*}
\begin{split}
E^k=\mathcal{O}(\tau^2+h_x^2+h_y^2)^2,
\end{split}
\end{equation*}
where the energy norm is defined by 
\begin{equation*}
\begin{split}
   E^k
   &={\parallel e^{k+1}_{\bar{t}}\parallel}^2+\frac{1}{4}\parallel\sqrt{a}(\Lambda_{h_x}^\alpha  e^{k+1}+\Lambda_{h_x}^\alpha  e^{k})
   \parallel^2+\frac{1}{4}(4\theta-1)\parallel\sqrt{a}(\Lambda_{h_x}^\alpha  e^{k+1}-\Lambda_{h_x}^\alpha  e^{k})\parallel^2\\
   &\quad+\frac{1}{4}\parallel\sqrt{b}(\Lambda_{h_y}^\beta  e^{k+1}+\Lambda_{h_y}^\beta  e^{k})
   \parallel^2+\frac{1}{4}(4\theta-1)\parallel\sqrt{b}(\Lambda_{h_y}^\beta  e^{k+1}-\Lambda_{h_y}^\beta  e^{k})\parallel^2\\
   &\quad+\theta^2\tau^6\parallel\sqrt{ab}\Lambda_{h_x}^\alpha\Lambda_{h_y}^\beta e^{k+1}_{\overline{t}}\parallel^2.
\end{split}
\end{equation*}
\end{theorem}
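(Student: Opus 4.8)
The plan is to repeat the argument of Theorem~\ref{theorem:3.1}, now driven by the two-dimensional energy estimate of Lemma~\ref{lemma3.8} (which already builds in the commutativity of the discretized matrices, Lemmas~\ref{lemma3.5}--\ref{lemma3.6}). First I would subtract the scheme \eqref{2.22} (equivalently \eqref{3.13}) from the consistency identity \eqref{2.23}: since both hold at the interior grid points and share the same structure, the error $e_{i,j}^{k}=u(x_i,y_j,t_k)-u_{i,j}^{k}$ satisfies an equation of exactly the form \eqref{3.13} with $f_{i,j}^{k}$ replaced by the truncation error $\widetilde{R}_{i,j}^{k}$ and with the homogeneous data $e_{i,j}^{0}=0$, $e_{i,j}^{1}=\mathcal{O}(\tau^3+\tau^2h_x^2+\tau^2h_y^2)$ (from \eqref{2.21}) and $e_{i,j}^{k}=0$ on $\partial\Omega$. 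Applying Lemma~\ref{lemma3.8} to this error equation then gives directly
\[
E^{k}\le e^{\frac{3}{2}k\tau}\left[E^{0}+\frac{3}{2}\tau\sum_{l=1}^{k}\|\widetilde{R}^{l}\|^{2}\right],
\]
with $E^{k}$ the energy norm of the statement written in terms of $e$ instead of $u$.

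It then remains to bound the two right-hand terms. By \eqref{2.24}, $\|\widetilde{R}^{l}\|\le\widetilde{C}_{u,\alpha,\beta}(\tau^2+h_x^2+h_y^2)$, so $\frac{3}{2}\tau\sum_{l=1}^{k}\|\widetilde{R}^{l}\|^{2}\le\frac{3}{2}T\,\widetilde{C}_{u,\alpha,\beta}^{2}(\tau^2+h_x^2+h_y^2)^{2}$. For $E^{0}$, since $e^{0}=0$ the energy collapses (using $\tfrac{1}{4}+\tfrac{1}{4}(4\theta-1)=\theta$) to $E^{0}=\|e^{1}_{\bar t}\|^{2}+\theta\|\sqrt{a}\,\Lambda_{h_x}^{\alpha}e^{1}\|^{2}+\theta\|\sqrt{b}\,\Lambda_{h_y}^{\beta}e^{1}\|^{2}+\theta^{2}\tau^{6}\|\sqrt{ab}\,\Lambda_{h_x}^{\alpha}\Lambda_{h_y}^{\beta}e^{1}_{\bar t}\|^{2}$. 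Here $\|e^{1}_{\bar t}\|^{2}=\tau^{-2}\|e^{1}\|^{2}=\mathcal{O}\big((\tau^2+\tau h_x^2+\tau h_y^2)^{2}\big)$ exactly as in \eqref{3.10}. For the remaining three terms I would use Lemmas~\ref{lemma3.5} and \ref{lemma3.6} to pass to the operators, $\|\sqrt{a}\,\Lambda_{h_x}^{\alpha}e^{1}\|^{2}\le a_1\big(-\nabla^{\alpha}_{h_x}e^{1},e^{1}\big)$, $\|\sqrt{b}\,\Lambda_{h_y}^{\beta}e^{1}\|^{2}\le b_1\big(-\nabla^{\beta}_{h_y}e^{1},e^{1}\big)$, and, because $\Lambda_{h_x}^{\alpha},\Lambda_{h_y}^{\beta}$ are symmetric and commute, $\|\sqrt{ab}\,\Lambda_{h_x}^{\alpha}\Lambda_{h_y}^{\beta}e^{1}_{\bar t}\|^{2}\le a_1b_1\big(\nabla^{\alpha}_{h_x}\nabla^{\beta}_{h_y}e^{1}_{\bar t},e^{1}_{\bar t}\big)$. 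Then I would expand $\nabla^{\alpha}_{h_x}e^{1}$, $\nabla^{\beta}_{h_y}e^{1}$ and $\nabla^{\alpha}_{h_x}\nabla^{\beta}_{h_y}e^{1}_{\bar t}$ through the Taylor-with-integral-remainder form of $e^{1}$, exactly as in the computation preceding \eqref{3.11}: since the leading parts of $e^{1}$ are restrictions of smooth functions, each discrete fractional operator acts as an $\mathcal{O}(h^2)$-perturbation of the corresponding continuous one, giving $\nabla^{\alpha}_{h_x}e^{1}=\mathcal{O}(\tau^3+\tau^2h_x^2+\tau^2h_y^2)$ and $\nabla^{\alpha}_{h_x}\nabla^{\beta}_{h_y}e^{1}_{\bar t}=\mathcal{O}(\tau^2+\tau h_x^2+\tau h_y^2)$. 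Cauchy--Schwarz together with $\theta^{2}\tau^{6}\le1$ for $\tau\le1$ then yields $E^{0}\le C^{2}(\tau^2+\tau h_x^2+\tau h_y^2)^{2}$ with $C$ independent of $\tau,h_x,h_y$.

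Finally, combining the two bounds and using $\tau\le 1$ (so $\tau h_x^2,\tau h_y^2\le h_x^2+h_y^2$),
\[
E^{k}\le e^{\frac{3}{2}T}\left[C^{2}(\tau^2+\tau h_x^2+\tau h_y^2)^{2}+\tfrac{3}{2}T\,\widetilde{C}_{u,\alpha,\beta}^{2}(\tau^2+h_x^2+h_y^2)^{2}\right]\le\widehat{C}(\tau^2+h_x^2+h_y^2)^{2},
\]
which is the claim $E^{k}=\mathcal{O}(\tau^2+h_x^2+h_y^2)^{2}$. I expect the main obstacle to be the estimate of $E^{0}$: one must control the fractional operators applied to $e^{1}$ (not merely $e^{1}$ itself), and in particular the mixed fourth-order term $\theta^{2}\tau^{6}\|\sqrt{ab}\,\Lambda_{h_x}^{\alpha}\Lambda_{h_y}^{\beta}e^{1}_{\bar t}\|^{2}$, which has no one-dimensional analogue; the commutativity from Lemmas~\ref{lemma3.5}--\ref{lemma3.6}, the smoothness of the leading terms of $e^{1}$, and the $\tau^{6}$ weight are precisely what keep this term of the required order. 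Everything else is a routine transcription of the one-dimensional computations with $h$ split into $h_x$ and $h_y$.
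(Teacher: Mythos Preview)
Your proposal is correct and follows essentially the same route as the paper: subtract \eqref{2.22} from \eqref{2.23} to obtain the error equation, invoke Lemma~\ref{lemma3.8} to get $E^{k}\le e^{\frac{3}{2}k\tau}\bigl[E^{0}+\tfrac{3}{2}\tau\sum_{l}\|\widetilde{R}^{l}\|^{2}\bigr]$, bound the truncation sum by \eqref{2.24}, and estimate each piece of $E^{0}$ by expanding $e^{1}$ via the Taylor-with-remainder formula and applying the discrete fractional operators as $\mathcal{O}(h^{2})$-perturbations of the continuous ones. Your handling of the mixed term $\theta^{2}\tau^{6}\|\sqrt{ab}\,\Lambda_{h_x}^{\alpha}\Lambda_{h_y}^{\beta}e^{1}_{\bar t}\|^{2}$ is in fact slightly more explicit than the paper's one-line assertion that it is $\mathcal{O}(\tau^{6})$, but the argument and conclusion are the same.
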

\begin{proof}
Subtracting  (\ref{2.22}) from  (\ref{2.23}),  it yields
\begin{equation*}
\begin{split}
   &\frac{1}{\tau^2}\delta^2_te^k_{i,j}+\theta^2 \tau^4 a_{i,j}b_{i,j}\nabla^\alpha_{h_x}\nabla^\beta_{h_y} \big(e_{i,j}^{k+1}-2e_{i,j}^{k}+e_{i,j}^{k-1}\big)\\
   &=\theta a_{i,j}\nabla^\alpha_{h_x}e^{k+1}_{i,j}+(1-2\theta)a_{i,j}\nabla^\alpha_{h_x}e^k_{i,j}+\theta a_{i,j}\nabla^\alpha_{h_x}e^{k-1}_{i,j}\\
   &\quad+\theta b_{i,j}\nabla_{h_y}^\beta e^{k+1}_{i,j}+(1-2\theta)b_{i,j}\nabla_{h_y}^\beta e^k_{i,j}+\theta b_{i,j}\nabla_{h_y}^\beta e^{k-1}_{i,j}+\widetilde{R}^{k}_{i,j}.
\end{split}
\end{equation*}
Using Lemma \ref{lemma3.8}, there exists
\begin{equation}\label{3.19}
\begin{split}
E^k \leq e^{\frac{3}{2}k\tau}\left[  E^0+\frac{3}{2}\tau\sum_{l=1}^k||\widetilde{R}^l||^2\right]
\end{split}
\end{equation}
with the energy norm 
\begin{equation}\label{3.20}
\begin{split}
 E^k
   &={\parallel e^{k+1}_{\bar{t}}\parallel}^2+\frac{1}{4}\parallel\sqrt{a}(\Lambda_{h_x}^\alpha  e^{k+1}+\Lambda_{h_x}^\alpha  e^{k})
   \parallel^2+\frac{1}{4}(4\theta-1)\parallel\sqrt{a}(\Lambda_{h_x}^\alpha  e^{k+1}-\Lambda_{h_x}^\alpha  e^{k})\parallel^2\\
   &\quad+\frac{1}{4}\parallel\sqrt{b}(\Lambda_{h_y}^\beta  e^{k+1}+\Lambda_{h_y}^\beta  e^{k})
   \parallel^2+\frac{1}{4}(4\theta-1)\parallel\sqrt{b}(\Lambda_{h_y}^\beta  e^{k+1}-\Lambda_{h_y}^\beta  e^{k})\parallel^2\\
   &\quad+\theta^2\tau^6\parallel\sqrt{ab}\Lambda_{h_x}^\alpha\Lambda_{h_y}^\beta e^{k+1}_{\overline{t}}\parallel^2.
\end{split}
\end{equation}
Next we estimate the local error truncation of $E^0$. Since $e_{i,j}^0=0$,
~~$\theta^2\tau^6\parallel\sqrt{ab}\Lambda_{h_x}^\alpha\Lambda_{h_y}^\beta e^{1}_{\overline{t}}\parallel^2=\mathcal{O}(\tau^6)$ in (\ref{3.14}) and
\begin{equation*}
\begin{split}
    e_{i,j}^1
 &=\frac{\tau^2}{2} \Big[a(x_i,y_j)\left(\frac{\partial^\alpha u(x_i,y_j,0)}{\partial |x|^\alpha}-\nabla^\alpha_{h_x}\varphi(x_i,y_j)\right)\\
 &\quad+b(x_i,y_j)\Big(\frac{\partial^\beta u(x_i,y_j,0)}{\partial |y|^\beta}-\nabla^\beta_{h_y}\varphi(x_i,y_j)
 \Big)\Big]
+\frac{1}{2}\int_0^\tau(\tau-t)^2\frac{\partial^3 u(x_i,y_j,t)}{\partial t^3}dt\\
 &=\frac{\tau^2}{2}\Big(a(x_i,y_j)C_{1,\alpha}\frac{\partial^{\alpha+2}u(\xi_i,y_j,t)}{\partial |x|^{\alpha+2}}h_x^2+b(x_i,y_j)C_{1,\beta}\frac{\partial^{\beta+2}u(x_i,\eta_j,t)}{\partial |y|^{\beta+2}}h_y^2\Big)\\
 &\quad+\frac{1}{2}\int_0^\tau{(\tau-t)^2\frac{\partial^3u(x_i,y_j,t)}{\partial t^3}dt}
\leq C_{1,\alpha,\beta}(\tau^3+\tau^2h_x^2+\tau^2h_y^2).
\end{split}
\end{equation*}
Here  the coefficients  $ C_{1,\alpha}$ and $C_{1,\beta}$ are the  constants independent of $h$, $\tau$ and 
\begin{equation*}
\begin{split}
C_{1,\alpha,\beta}
&= \max\limits_{0\leq x\leq x_r,0\leq y\leq y_r,0\leq t\leq T}\Big\{\frac{1}{2}a_1\Big|C_{1,\alpha}\frac{\partial^{\alpha+2}u(\xi_i,y_j,t)}{\partial |x|^{\alpha+2}}\Big|,\\
&\qquad\frac{1}{2}b_1\Big|C_{1,\beta}\frac{\partial^{\beta+2}u(x_i,\eta_j,t)}{\partial |y|^{\beta+2}}\Big|,\frac{1}{6}\Big|\int_0^\tau\frac{\partial^3u(x_i,y_j,t)}{\partial t^3}dt\Big|\Big\}.
\end{split}
\end{equation*}
Then we obtain 
\begin{equation}\label{3.21}
\begin{split}
   \parallel e^1_{\overline{t}}\parallel^2
   &=\parallel \frac{e^1-e^0}{\tau}\parallel^2\\
   &\leq(N_x-1)h_x(N_y-1)h_y\frac{1}{\tau^2}C_{1,\alpha,\beta}(\tau^3+\tau^2h_x^2+\tau^2h_y^2)\cdot C_{1,\alpha,\beta}(\tau^3+\tau^2h_x^2+\tau^2h_y^2)\\
   &\leq C_{1,\alpha,\beta}^2x_ry_r(\tau^2+\tau h_x^2+\tau h_y^2)^2.
\end{split}
\end{equation}
From (\ref{1.1}) and the above equations, there exists
\begin{equation*}
\begin{split}
&\parallel\sqrt{a}\Lambda_{h_x}^\alpha e^{1}\parallel^2
=-(a\nabla_{h_x}^\alpha e^{1},e^{1})=-h_xh_y\sum_{i=1}^{N_x-1}\sum_{j=1}^{N_y-1}a_{i,j}\left(\nabla_{h_x}^\alpha e^{1}_{i,j}\right)\cdot e_{i,j}^1\\
&=-h_xh_y\sum_{i=1}^{N_x-1}\sum_{j=1}^{N_y-1}a_{i,j} \sum_{l=0}^{N_x}\frac{-\kappa_{\alpha}}{h_x^\alpha}\varphi_{i,l}^{\alpha}
\Big[\frac{a_{i,j}C_{1,\alpha}}{2}\frac{\partial^{\alpha+2}u(\xi_i,y_j,t)}{\partial |x|^{\alpha+2}}\tau^2h_x^2\\
&\quad+\frac{b_{i,j}C_{1,\beta}}{2}\frac{\partial^{\beta+2}u(x_i,\eta_j,t)}{\partial |y|^{\beta+2}}\tau^2h_y^2
+\frac{1}{2}\int_0^\tau{(\tau-t)^2\frac{\partial^3u(x_i,y_j,t)}{\partial t^3}dt}\Big]\cdot e_{i,j}^1\\
&= -h_xh_y\sum_{i=1}^{N_x-1}\sum_{j=1}^{N_y-1}a_{i,j}
\Big[\frac{a_{i,j}C_{1,\alpha}}{2}\frac{\partial^{2\alpha+2}u(\xi_i,y_j,t)}{\partial |x|^{2\alpha+2}}\tau^2h_x^2
+C_{2,\alpha}\frac{\partial^{2\alpha+4}u(\widehat{\xi}_{i},y_{j},t)}{\partial |x|^{2\alpha+4}}\tau^2h_x^4\\
&\quad+\frac{b_{i,j}C_{1,\beta}}{2}\frac{\partial^{\alpha+\beta+2}u(x_i,\eta_j,t)}{\partial |x|^{\alpha}|y|^{\beta+2}}\tau^2h_y^2
+C_{2,\alpha,\beta}\frac{\partial^{\alpha+\beta+4}u(\overline{\xi_{i}},\eta_{j},t)}{\partial |x|^{\alpha+2}|y|^{\beta+2}}\tau^2h_x^2h_y^2\Big]\cdot e_{i,j}^1\\
&\quad-h_xh_y\sum_{i=1}^{N_x-1}\sum_{j=1}^{N_y-1}\frac{a_{i,j}}{2}\int_0^\tau{(\tau-t)^2
\left[\frac{\partial^{\alpha+3}u(x_i,y_j,t)}{\partial t^3\partial |x|^\alpha}
+C_{3,\alpha}\frac{\partial^{\alpha+5}u(\widetilde{\xi}_{i},y_{j},t)}{\partial t^3 \partial |x|^{\alpha+2}}h_x^2\right]dt}\cdot e_{i,j}^1\\
&   \leq  C_{3,\alpha,\beta}(\tau^3+\tau^2h_x^2+\tau^2h_y^2)\cdot C_{1,\alpha,\beta}(\tau^3+\tau^2h_x^2+\tau^2h_y^2),
\end{split}
\end{equation*}
where $\xi_{i},\widehat{\xi}_{i},\overline{\xi_{i}}, \widetilde{\xi_{i}} \in (0,x_r)$ and $ C_{l,\alpha}$ and $C_{l,\alpha,\beta}, 1\leq l\leq 3$ are the  constants.
Similarly, we have
\begin{equation*}
\begin{split}
&\parallel\sqrt{b}\Lambda_{h_y}^\beta e^{1}\parallel^2\leq  \widetilde{C}_{3,\alpha,\beta}(\tau^3+\tau^2h_x^2+\tau^2h_y^2)\cdot \widetilde{C}_{1,\alpha,\beta}(\tau^3+\tau^2h_x^2+\tau^2h_y^2)
\end{split}
\end{equation*}
with the constants $\widetilde{C}_{1,\alpha,\beta}$ and $\widetilde{C}_{3,\alpha,\beta} $.

According to  (\ref{3.20}), (\ref{3.21}) and the above equations, we get
\begin{equation}\label{3.22}
 E^0\leq  C_{\alpha,\beta}^2x_ry_r(\tau^2+\tau h_x^2+\tau h_y^2)^2,
\end{equation}
where   $C_{\alpha,\beta}$ is a constant.
Hence, using (\ref{2.17}), (\ref{3.19}) and  (\ref{3.22}), there exists
\begin{equation*}
\begin{split}
E^k
&\leq e^{\frac{3}{2}k\tau}\left[  C_{\alpha,\beta}^2x_ry_r(\tau^2+\tau h_x^2+\tau h_y^2)^2+\frac{3}{2}k\tau C^2_{u,\alpha,\beta}(\tau^2+h_x^2+h_y^2)^2\right]\\
&\leq  \widetilde{C}_{\alpha,\beta} e^{\frac{3}{2}T}(\tau^2+h_x^2+h_y^2)^2
\end{split}
\end{equation*}
with $ \widetilde{C}_{\alpha,\beta}=2\max\{ C_{\alpha,\beta}^2x_ry_r,\frac{3}{2}C^2_{u,\alpha,\beta}T\}$.
The proof is completed.
\end{proof}
\begin{theorem}\label{theorem3.4}
  The difference scheme (\ref{2.27}) with $1<\alpha,\beta\leq2$ and $\frac{1}{4}\leq\theta\leq1$ is unconditionally stable.
\end{theorem}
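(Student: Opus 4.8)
The plan is to reduce the ADI scheme \eqref{2.27} to the one-step perturbed scheme \eqref{2.22} (equivalently, to its componentwise form \eqref{3.13}) and then to apply the energy estimate of Lemma \ref{lemma3.8}, in the same way that Theorem \ref{theorem3.2} follows from Lemma \ref{lemma3.7}. Since the scheme is linear, stability amounts to showing that an arbitrary perturbation of the initial data, of $\mathbf{U}^1$, and of the source term yields a grid function that again solves \eqref{3.13} with $f^k$ replaced by the source perturbation, and then to bound its energy norm uniformly in the mesh parameters.

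First I would establish the algebraic equivalence of \eqref{2.27} and \eqref{2.22}. Solving the second half-step of \eqref{2.27} for the intermediate vector $\mathbf{U}^{*}$ and substituting it into the first half-step collapses the two relations into the single factored system \eqref{2.22}; a short computation shows that the coefficients of $\mathbf{U}^{k+1}$, $\mathbf{U}^{k}$, $\mathbf{U}^{k-1}$ and $\mathbf{F}^{k}$ produced by this elimination match those of \eqref{2.22} exactly. This is where Lemma \ref{lemma3.5} is needed: the commutativity $\mathcal{A}_x\mathcal{A}_y=\mathcal{A}_y\mathcal{A}_x$ together with $\Lambda_x\Lambda_y=\Lambda_y\Lambda_x$ lets the splitting cross term $\theta^2\tau^4 a_{i,j}b_{i,j}\nabla^\alpha_{h_x}\nabla^\beta_{h_y}\bigl(u^{k+1}_{i,j}-2u^{k}_{i,j}+u^{k-1}_{i,j}\bigr)$ be written with the symmetric factorization $\Lambda^\alpha_{h_x}\Lambda^\beta_{h_y}$ that appears in the energy norm of Lemma \ref{lemma3.8}, so that \eqref{2.22} is precisely the scheme \eqref{3.13} to which that lemma applies.

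With the equivalence in hand, Lemma \ref{lemma3.8}, applied to the perturbation equation (the source $f$ now standing for the perturbation of the original source, and the perturbations of $\varphi$ and of $u^1_{i,j}$ from \eqref{2.21} absorbed into $E^0$), gives for $\tau\le 2/3$
\begin{equation*}
E^k\le e^{\frac{3}{2}k\tau}\Bigl[E^0+\frac{3}{2}\tau\sum_{l=1}^{k}\|f^l\|^2\Bigr].
\end{equation*}
The right-hand side contains no condition linking $\tau$ with $h_x$ or $h_y$, and $E^k$ dominates $\|u^{k+1}_{\bar t}\|^2$ together with the fractional seminorms of $u^{k+1}$; hence the scheme is unconditionally stable.

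I expect the bookkeeping in the second step to be the main obstacle: one must check, with the variable coefficients present, that the ADI form \eqref{2.27}, the factored form \eqref{2.22}, and the componentwise scheme \eqref{3.13} are literally the same discrete equation — that the diagonal matrices $D$, $E$ and the Kronecker-product operators $\mathcal{A}_x=I\otimes A_\alpha$, $\mathcal{A}_y=A_\beta\otimes I$ line up correctly after $\mathbf{U}^{*}$ is eliminated — which is exactly the point at which the commutativity of Lemma \ref{lemma3.5} does its work. Everything after that is a direct reuse of Lemmas \ref{lemma3.8} and \ref{lemma3.2}.
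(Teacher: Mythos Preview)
Your proposal is correct and follows essentially the same route as the paper: reduce the ADI system \eqref{2.27} to the perturbed one-step scheme \eqref{2.22}/\eqref{3.13} and invoke the two-dimensional energy estimate of Lemma \ref{lemma3.8}. The paper's own proof is a single sentence (``From Lemma \ref{lemma3.7}, the result is obtained''), which almost certainly intends Lemma \ref{lemma3.8}; your write-up simply fills in the intermediate elimination of $\mathbf{U}^{*}$ and the role of commutativity that the paper leaves implicit.
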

\begin{proof}
  From Lemma \ref{lemma3.7}, the result  is obtained.
\end{proof}
\begin{remark}
The operator $\mathcal{L}$ appears in the nonlocal wave equation \cite{Du:12}
\begin{equation*}
\left\{ \begin{split}
\frac{\partial^2u(x,t)}{\partial t^2} - \mathcal{L}_\delta u(x,t)   &=f_\delta(x,t) &  ~~{\rm on}  & ~~\Omega,\, t>0,\\
                        u(x,0)&=u_0      &  ~~{\rm on}  & ~~\Omega \cup \Omega_\mathcal{I},\\
                 u&=g        &  ~~{\rm on}  & ~~ \Omega_\mathcal{I},t>0.
 \end{split}
 \right.
\end{equation*}
From \cite{CD:16}, we known that the approximation operator of $ -\mathcal{L}_\delta$ is also the symmetric positive definite.
Hence, the framework  of the  stability  and  convergence analysis are still valid for  the nonlocal wave equation.
\end{remark}
\section{Numerical results}
In this section, we numerically verify the above theoretical results and the $ l_\infty$ norm is used to measure the numerical errors.

\begin{example}Consider  the space-Riesz fractional wave equation (\ref{2.5}),
on a finite domain  $0< x < 1 $,  $0<t \leq 1$  with the coefficient $d(x)=x^{\alpha}$, the forcing function is
\begin{equation*}
\begin{split}
f(x,t)=
&e^{-t}x^2(1-x)^2\\
&+\frac{x^{\alpha}e^{-t}}{2\cos(\alpha \pi/2)}
\left[
   \Gamma(5)\frac{x^{4-\alpha}+(1-x)^{4-\alpha}}{\Gamma(5-\alpha)}-2\Gamma(4)\frac{x^{3-\alpha}+(1-x)^{3-\alpha}}{\Gamma(4-\alpha)}+\Gamma(3)\frac{x^{2-\alpha}+(1-x)^{2-\alpha}}{\Gamma(3-\alpha)}   \right]
\end{split}
\end{equation*}
with the initial conditions $u(x,0)=x^2(1-x)^2$, $\frac{\partial}{\partial t}u(x,0)=-x^2(1-x)^2$, and the boundary conditions $u(0,t)=u(1,t)=0$.
The exact solution of the fractional PDEs  is $$u(x,t)=e^{-t}x^2(1-x)^2.$$
\begin{table}  []  \fontsize{9.5pt}{12pt}  \selectfont
\begin{center}
  \caption{The maximum errors  and convergent orders for (\ref{2.14})  with $\tau=h$.}\vspace{5pt}
\begin{tabular*}{\linewidth}{@{\extracolsep{\fill}}*{7}{c}}                        \hline
   $\tau$& $\alpha=1.3,\theta=0.25$ & Rate   & $\alpha=1.6,\theta=0.5$ & Rate     & $\alpha=1.9,\theta=1$ & Rate      \\\hline
~~1/40&            8.8516e-05&            &               9.0532e-05&          &             7.5678e-05& \\
~ 1/80&            2.2156e-05&      1.9983&               2.2329e-05&   2.0195 &             1.9487e-05&  1.9574 \\
~1/160&            5.5242e-06&      2.0038&               5.5161e-06&   2.0172 &             4.7477e-06&  2.0372  \\
~1/320&            1.3761e-06&      2.0052&               1.3636e-06&   2.0163 &             1.1581e-06&  2.0355 \\\hline
\end{tabular*}{\label{tab:1}}
\end{center}
\end{table}
Table \ref{tab:1} shows that  the scheme (\ref{2.14}) is second order convergent in both space and time directions.
\end{example}

\begin{example} Consider the two-dimensional space-Riesz fractional wave  equation (\ref{1.1}),
on a finite domain $ 0<  x< 1,\,0<  y< 1$, $0< t \leq 1/2$  with the variable coefficients
\begin{equation*}
\begin{split}
d(x,y)=x^{\alpha}y, \quad e(x,y)=xy^{\beta},
\end{split}
\end{equation*}
and the initial conditions $u(x,y,0)=\sin(1)x^2(1-x)^2y^2(1-y)^2$, $\frac{\partial}{\partial t}u(x,y,0)=\cos(1)x^2(1-x)^2y^2(1-y)^2$ with the
zero Dirichlet boundary conditions on the rectangle. The exact solution of the  PDEs  is
\begin{equation*}
\begin{split}
u(x,y,t)=\sin(t+1)x^2(1-x)^2y^2(1-y)^2.
\end{split}
\end{equation*}
Using  the above  conditions, it is easy to obtain the forcing function $f(x,y,t)$.
\begin{table}[]\fontsize{9.5pt}{12pt}\selectfont
\begin{center}
  \caption{The maximum errors  and convergent orders for (\ref{2.27})  with $\tau=h_x=h_y$ and $\theta=0.75$.}\vspace{5pt}
\begin{tabular*}{\linewidth}{@{\extracolsep{\fill}}*{7}{c}}                        \hline
   $\tau$& $\alpha=1.3,\beta=1.7$ & Rate   & $\alpha=1.5,\beta=1.5$ & Rate     & $\alpha=1.7,\beta=1.3$ & Rate      \\\hline
~~1/20&            1.4066e-04&            &               1.4066e-04&          &              1.4500e-04& \\
~ 1/40&            3.8290e-05&      1.8772&               3.7449e-05&   1.9093 &              3.7041e-05&   1.9688 \\
~~1/80&            9.4992e-06&      2.0111&               9.4992e-06&   1.9790 &              9.4992e-06&  1.9632  \\
~1/160&            2.4049e-06&      1.9818&               2.4049e-06&   1.9818 &              2.4049e-06&  1.9818 \\\hline
\end{tabular*}\label{tab:2}
\end{center}
\end{table}
Table \ref{tab:2} shows that  the scheme (\ref{2.27}) is second order convergent in both space and time directions.
\end{example}

\section{Conclusion}
In this work we have developed the energy method to estimate the two-dimensional  space-Riesz fractional wave  equation with the variable coefficients.
To the best of our knowledge,   the  convergence and stability are lack of study  for the one-dimensional space-Riesz fractional wave equation with the nonzero conditions.
In this paper, the priori error estimates have been  established and the convergence analysis and stability  of the proposed method have been proved.
For two-dimensional cases with the variable coefficients, the discretized matrices  are proved to be commutative, which ensures to carry out of the priori error estimates.
Numerical results have been given to illustrate the robustness  and efficiency of the presented method with the second order convergence.
We remark that though this current paper focus on the space-Riesz fractional wave  equation, the energy estimates is still valid for the compact finite difference schemes and the nonlocal wave equation \cite{Du:12}.

\section*{Acknowledgments}This work was supported by NSFC 11601206, the Fundamental Research Funds for the Central Universities under Grant No. lzujbky-2016-105, and SIETP 201710730065.


\begin{thebibliography}{99}


\addtolength{\itemsep}{-2.5 mm} 


\bibitem{Bhrawy:16}
Bhrawy, A.H.,  Zaky, M.A.,  Van Gorder, R.A.: A  space-time Legendre spectral tau method for the two-sided space Caputo fractional diffusion-wave equation.
Numer. Algor.   {\bf71},  151-180 (2016).



\bibitem{Chen:0013}  Chen, M.H.,  Deng, W.H.: Fourth order accurate scheme for the space fractional diffusion equations.
 SIAM J. Numer. Anal.  \textbf{52}, 1418-1438   (2014).

%


\bibitem{Chen:16}  Chen, M.H.,  Deng, W.H.: High order algorithm for the time-tempered fractional Feynman-Kac equation. arXiv:1607.05929.


\bibitem{CD:16}Chen, M.H.,  Deng, W.H.:
  Convergence proof for the multigird method of the nonlocal model.  SIAM J. Matrix Anal. Appl (minor revised).  arXiv:1605.05481.



\bibitem{Chen:17}  Chen, M.H.,  Deng, W.H.: A second-order accurate numerical method for the space-time tempered fractional diffusion-wave equation.
 Appl. Math. Lett.  {\bf68}, 87-93  (2017).

\bibitem{Chen:1992}
Chen, C., Thom\'{e}e. V., Wahlbin. L.B.: Finite element approximation of a parabolic integro-differential equation with a weakly singular kernel.
Math. Comput.  {\bf198}, 587-602 (1992).


\bibitem{Cuesta:06}
 Cuesta,  E.,  Lubich, Ch.,   Palencia,  C.:
 Convolution quadrature time discretization of fractional diffusion-wave equations.
Math. Comput. {\bf75}, 673-696   (2006).




\bibitem {Douglas:55}  Dougls, J.:
On the numerical integration of $u_{xx}+u_{yy}=u_{tt}$ by implicit methods.
J. Soc. Indust. Appl. Math.  {\bf3},  42-65 (1955).

\bibitem {Douglas:64} Dougls, J.:
 Alternating direction methods for three space variables.
Numer. Math.  {\bf6},  428-453  (1964).

\bibitem{Du:12}
  Du, Q.,  Gunzburger, M.,  Lehoucq, R.,     Zhou, K.:
 Analysis and approximation of nonlocal diffusion problems with volume constraints.
SIAM Rev. {\bf56},  676-696  (2012).





\bibitem{Garg:14}
Garg, M.,  Manohar, P.: Matrix method for numerical solution of space-time fractional diffusion-wave equations with three space variables.
Afr. Mat.   {\bf25},  161-181 (2014).




\bibitem{Hao:15}
Hao, Z.P.,  Lin, G.,  Sun, Z.Z: A  high-order difference scheme for the fractional sub-diffusion equation.
Int. J. Comput. Math.   {\bf94},  405-426  (2017).


\bibitem{Hu:99}   Hu, J.W.,   Tang, H.M.: Numerical Methods for Differential Equations.   Science Press, Beijing,  1999.




\bibitem{Ji:15} Ji, C.C.,     Sun, Z.Z.:  A high-order compact finite difference shcemes for the fractional sub-diffusion equation.
J. Sci. Comput.  \textbf{64},   959-985 (2015).



\bibitem{Laub:05}   Laub, A.J.: Matrix Analysis for Scientists and Engineers.  SIAM, 2005.


\bibitem{Liu:13}  Liu, F.,  Meerschaert, M.,  McGough, R. Zhuang, P., Liu,  Q.:
Numerical methods for solving the multi-term time-fractional wave-diffusion equation.
Fract. Calc. Appl. Anal.   {\bf16},  9-25 (2013).




\bibitem{Lubich:86}   Lubich, Ch.:  Discretized fractional calculus.  SIAM J. Math. Anal. \textbf{17},  704-719  (1986).

\bibitem {Mainardi:97}  Mainardi, F.:  Fractal calculus: some basic problems in continuum and statistical mechanics,
 In: Carpinteri, A., Mainardi, F. (Eds.),
Fractals and Fractional Calculus in Continuum Mechanics.  Springer,  1997.

\bibitem{Mainardi:2001}
Mainardi, F.,  Luchko, Y.,  Pagnini G.: The fundamental solution of the space-time fractional diffusion equation.
Fract. Calc. Appl. Anal.   {\bf4},  153-192 (2001).




\bibitem{Mclean:1993}
McLean, W., Thom\'{e}e. V.: Numerical solution of an evolution equation with a positive-type memory term.
J. Austral. Math. Soc. Ser. B  {\bf35}, 23-70 (1993).




\bibitem{Metzler:02}
Metzler, R.,  Nonnenmacher, T.F.: Space-and time-fractional diffusion and wave equations, fractional Fokker-Planck equations, and physical motivation.
Chem. Phys.  {\bf284}, 67-90 (2002).



 \bibitem{Mustapha:13}  Mustapha, K.,  Mclean,  W.:
 Superconvergence of a discontinous galerkin method for fractional diffusion and wave equations.
 SIAM J. Numer. Anal.    {\bf51},  491-515 (2013).



 \bibitem{Ortigueira:06}  Ortigueira, M.D.:  Riesz potential operators and inverses via fractional centred derivatives.
 Int. J. Math. Math. Sci.  1-12 (2006).





\bibitem{Podlubny:99}  Podlubny, I.: Fractional Differential Equations.  Academic Press, New York, 1999.



\bibitem{Quarteroni:08}  Quarteroni, A., Valli, A.: Numerical Approximation of Partial Differential Equations.  Springer,  2008.


\bibitem{Sousa:12}   Sousa, E.,  Li,  C.:  A weighted finite difference method for the fractional diffusion equation based on the
                    Riemann-Liouville drivative.  Appl. Numer. Math.  {\bf90}, 22-37 (2015).


\bibitem{Sun:2005}   Sun, Z.Z.:  Numerical Methods for Partial Differential Equations.  Science Press, Beijing,  2005.

\bibitem{Tadjeran:06} Tadjeran, C.,   Meerschaert, M.M.,  Scheffler,  H.P.: A second-order accurate numerical approximation for
 the fractional diffusion equation.  J. Comput. Phys. {\bf213}, 205-213 (2006).

\bibitem{Tarasov:10}  Tarasov, V.E.:  Fractional Dynamics: Applications of Fractional Calculus to
            Dynamics of Particles, Fields and Media.  Higher Education Press, Beijing and Springer-Verlag Berlin Heidelberg, 2010.




 \bibitem{Sun:06}  Sun, Z.Z.,  Wu,  X.N.:
A fully discrete difference scheme for a diffusion-wave system.
 Appl. Numer. Math.  {\bf56},  193-209 (2006).


\bibitem{Tian:12} Tian, W.Y., Zhou,  H.,   Deng, W.H.:   A class of second order difference approximations for Solving
                space fractional diffusion equations. Math. Comput.   {\bf84}, 1703-1727 (2015).

\bibitem{Wang:2015} Wang, P.D., Huang, C.M.:  An energy conservative difference scheme for the nonlinear fractional Schr\"{o}dinger equations.
             J. Comput. Phys.  {\bf293},  238-251 (2015).

\bibitem{Yang:14}  Yang, J.Y.,  Huang, J.F.,  Liang, D.M., Tang,  Y.F.:
Numerical solution of fractional diffusion-wave equation based on fractional multistep method.
Appl. Math. Modell.   {\bf38},  3652-3661 (2014).


\bibitem{Zeng:15}  Zeng,  F.H.:
Second-order stable finite difference schemes for the time-fractional diffusion-wave equation.
J. Sci. Comput.   {\bf65},  411-430 (2015).


 \bibitem{Zhang:12}  Zhang, Y.N., Sun, Z.Z., Zhao,  X.:
Compact alternating direction implicit scheme for the two-dimensional fractional diffusion-wave equation.
 SIAM J. Numer. Anal.    {\bf50},  1535-1555 (2012).




\end{thebibliography}
\end{document}